\numberwithin{equation}{section}
\newcommand\E{{E}}
\newcommand\PP{{\text P}}
\newcommand\Pt{{\rm P}}
\renewcommand\O{{\mathcal O}}
\newcommand{\dE}{\,{\rm d}\E}
\newcommand{\dP}{\,{\rm d}\PP}
\newcommand{\ds}{\,{\rm d}s}
\newcommand{\df}{\,{\rm d}f}
\newcommand\vphi{\varphi}
\newcommand\kD{k_{\Delta}}
\newcommand{\calD}{ \mathcal{D}}
\newcommand\Sp{{\mathscr S}}
\newcommand\R{\mathbb{R}}
\renewcommand{\P}{ {\mathbb P}}
\newtheorem{thm}{Theorem}[section]
\newtheorem{prop}[thm]{Proposition}
\newtheorem{remark}{Remark}
\newcommand{\p}{p}
\newcommand{\q}{q}
\newcommand{\pex}{\p_{\text{ex}}}
\newcommand{\bb}{\textbf{b}}
\newcommand{\diffp}{\kappa}
\newcommand{\reaction}{\gamma}
\newcommand{\aleQ}{\mathbb Q}
\newcommand{\aleS}{\mathbb S}
\title{Serendipity Nodal VEM spaces}
\author{
L. Beir\~ao da Veiga, F. Brezzi, L.D. Marini, A. Russo}
\address{{\it Louren\c co Beir\~ao da Veiga} -  Dipartimento di Matematica, Universit\`a di Milano Statale,
Via Saldini 50, I-20133 Milano (Italy),\\
lourenco.beirao@unimi.it}
\address{{\it Franco Brezzi} - IMATI del CNR,
Via Ferrata 5,
27100 Pavia, Italy,\\
brezzi@imati.cnr.it}
\address{{\it Luisa Donatella Marini} - Dipartimento di Matematica, Universit\`a di Pavia,
and IMATI del CNR, Via Ferrata 1, 27100 Pavia, Italy,\\
marini@imati.cnr.it}
\address{{\it Alessandro Russo} - Dipartimento di Matematica e Applicazioni, Universit\`a di Milano--Bicocca,
Via Cozzi 53, I-20153, Milano, Italy,\\
alessandro.russo@unimib.it}
\date{}
\begin{document}

\begin{abstract}
 We introduce a new variant of Nodal Virtual Element spaces that mimics the ``Serendipity Finite Element Methods'' (whose most popular example is the 8-node quadrilateral) and allows to reduce (often in a significant way) the number of internal degrees of freedom. When applied to the faces of a three-dimensional decomposition, this allows a reduction in the number of {\it face} degrees of freedom: an improvement that cannot be achieved by a simple static condensation. On triangular and tetrahedral decompositions the new elements (contrary to the original VEMs) reduce exactly to the classical Lagrange FEM. On quadrilaterals and hexahedra the new elements are quite similar (and have the same amount of degrees of freedom) to the Serendipity Finite Elements, but are {\it much more robust} with respect to element distortions. On more general 
 polytopes the Serendipity  VEMs are the natural (and simple) generalization 
 of the simplicial case.
\end{abstract}
\bigskip
\maketitle

\section{Introduction}

The original Virtual Element Methods, as introduced in \cite{volley}, show a surprising robustness with respect to the variety of shapes
allowed for the geometry of elements, and compared to Finite Elements allow a much easier construction of $C^1$ elements (and actually also $C^2$ or more). These aspects raised the interest of several  groups working on various applications (as for instance
topology optimization in elasticity problems {\cite{VEM-topopt}},  fractured materials \cite{Berrone-VEM}, plate bending problems \cite{Brezzi:Marini:plates}, or the Cahn-Hilliard equation \cite{ABSVu}).

An interesting feature is surely the possibility of joining classical Finite Elements (on rectangles or quadrilaterals) in some part of the domain, and VEMs
in some other part, as the two methods share the same trial functions and degrees of freedom on edges.  But as far as the {\it internal } degrees of
freedom are concerned, on simple geometries, as on triangles, VEMs are more expensive than the traditional Finite Elements: for a given accuracy $k$,  VEMs on triangles  use (together with polynomials of degree $k $ on every edge) a number of internal degrees of freedom equal to   $k(k-1)/2$, instead of the 
$(k-1)(k-2)/2$ used by Finite Elements. This would also imply that the possibility of combining FEM and VEM is not immediate in three dimensions even when the common face is a triangle. 


On quadrilaterals,  VEMs have again $k(k-1)/2$ internal degrees of freedom, that now should be  compared to
the $(k-1)^2$  internal degrees of freedom of ${\mathbb Q}_k$-Finite Flements, or to the
$(k-2)(k-3)/2$ internal d.o.f.s of Serendipity FEM (on quadrilaterals).

However, on non-affine quadrilaterals the Serendipity Finite Elements suffer a severe deterioration of accuracy: see e.g. \cite{A-B-F-nod} or the more recent \cite{A-A-Sere},  \cite{Gillette-14}. See also \cite{Periodic-SIAM} for a general survey on the various Finite Element choices. On the contrary, VEMs have in their {\it robustness with respect to distortion} one of their most relevant advantages.

On the other hand, the biggest advantage of classical FEM  (over Virtual Elements and similar methods)  is surely  the fact that  the values of trial or test functions of FEMs can be easily computed at any point, while VEMs are easily computed only along the edges. The common remedy, for VEMs, is to use (for computing point values and for similar information), instead of the true trial and test functions, their $L^2$-projection on some polynomial space of degree, say, $r$. For the original VEMs in \cite{volley} we could take only $r=k-2$ (with an obvious lack of accuracy) or use other, non orthogonal, projectors (a procedure that needed a theoretical justification).  However, for their advanced versions, as in \cite{projectors}, we could reach $r=k$ still using $k(k-1)/2$ internal degrees of freedom. This however, on simple elements like triangles or tetrahedra, was still higher than the FEM counterpart.

Here we propose a variant of VEMs that mimics, in some sense, the Serendipity approach of FEMs. The new variant 
coincides exactly, on triangles, with traditional Finite Elements, and on quadrilaterals can (in some sense) keep all the good aspects of Finite Elements without most drawbacks. In particular, on parallelograms we use $(k-2)(k-3)/2$ degrees of freedom (as for Serendipity FEMs) and we can easily compute
the $L^2$-projection on $\P_k$, but we can also keep the same accuracy when the element is strongly distorted. The only degeneracy that is not fully allowed is when the quadrilateral element becomes a triangle (as in the second element of Figure \ref{dege} below). But in that case (even in the limit,
when the quadrilateral is {\it exactly} a triangle), we can keep optimal accuracy just by using $(k-1)(k-2)/2$ degrees of freedom
(the same amount that we would use on a triangle). 

Moreover, the edge degrees of freedom  are exactly the same as for finite elements, so that in 2 dimensions we can combine
the two methods (using each in a different part of the domain). The same is now true also in three dimensions, if the matching VEM-FEM
is done on triangular faces, and even the matching through quadrilateral faces could be easily arranged (for instance with a slightly nonconforming 
matching).

Our construction is a mixture of Serendipity ideas and of the ones coming from enhanced elements of \cite{projectors}. Roughly speaking, instead
of keeping (among the degrees of freedom) the moments up to the order $k-2$ (as in the original VEMs), we go down to $k-3$,
and we use the boundary d.o.f.s and the internal moments up to $k-3$ to compute a {\it rough projector} from the VEM space onto $\P_k$. Then we use 
such a {\it rough projector} to define the moments of degree up to $k$ as a byproduct.

Throughout the paper we will use the following notation.

\medskip


For $k\ge 0$ and $d\ge 1$ integer we denote by
$\P_{k,d}$ the set of polynomials of degree $\le k$ in $d$ variables.
 Often, the dimension $d$ will be omitted when it is reasonably clear form the context. With a (rather common) abuse of language we also set $\P_{-1}\equiv\{0\}$. Whenever convenient, for a generic geometric object $\O$ in $d$ dimensions we will denote by $\P_{k,d}(\O)$ the restriction to $\O$ of $\P_{k,d}$. 
 

Following \cite{super-misti} we denote by $\pi_{k,d}$ the dimension of $\P_{k,d}$  (that is, for instance, $(k+1)(k+2)/2$ in two variables and  
$(k+1)(k+2)(k+3)/6$ in three variables).

An outline of the paper is as follows. In Section 2 we recall the original VEMs in 2 dimensions, and we compare them with classical Lagrange Finite Elements on triangles, and with classical $\mathbb{Q}_k$ and Serendipity Finite Elements on parallelograms and
quadrilaterals. In Section 3 we introduce our new Serendipity Virtual Elements in 2 dimensions, and we extend them to the three dimensional case in Section 4. Numerical examples involving the convection-diffusion-reaction equation are presented in Section 5.

\section{Original Nodal VEMs}

\subsection{Original nodal Elements in 2 dimensions}

Here below we recall the original ``nodal Virtual Element'' as reported in \cite{volley, hitchhikers} for the two dimensional case, and in \cite{projectors} for the three-dimensional one. As common, we will
concentrate on the description of the finite dimensional spaces within a single element (polygon) $\E$. The
assembling of the spaces on the whole computational domain will then be done with the same procedure
followed for $H^1$-conforming Finite Elements.


As is already well known, Virtual Elements allow an enormous generality in the geometry
of the elements to be used in the decomposition of the computational domain, and the precise limits of this generality are, in some cases, still to be understood.  For simplicity, here we will consider the typical assumption (see for instance \cite{volley}): there exists a fixed number $\rho_0>0$, independent of the decomposition, such that for every element $\E$ (with diameter $h_\E$) we have that: {\it i)} $\E$ is star-shaped with respect of all the points of a ball of radius $\rho_0\,h_\E$, and {\it ii)} every edge $e$ of $\E$ has length $|e|\ge \rho_0\,h_\E$. Actually, more general assumptions could be allowed in the definition of our VEM
spaces, but this goes beyond the scope of the present paper (again, see for instance \cite{volley}). Figure \ref{general} will show some examples of polygons that 
are indeed acceptable for our constructions. 

\begin{figure}[htbp]
  \begin{center}
    \includegraphics[width=10.7cm]{./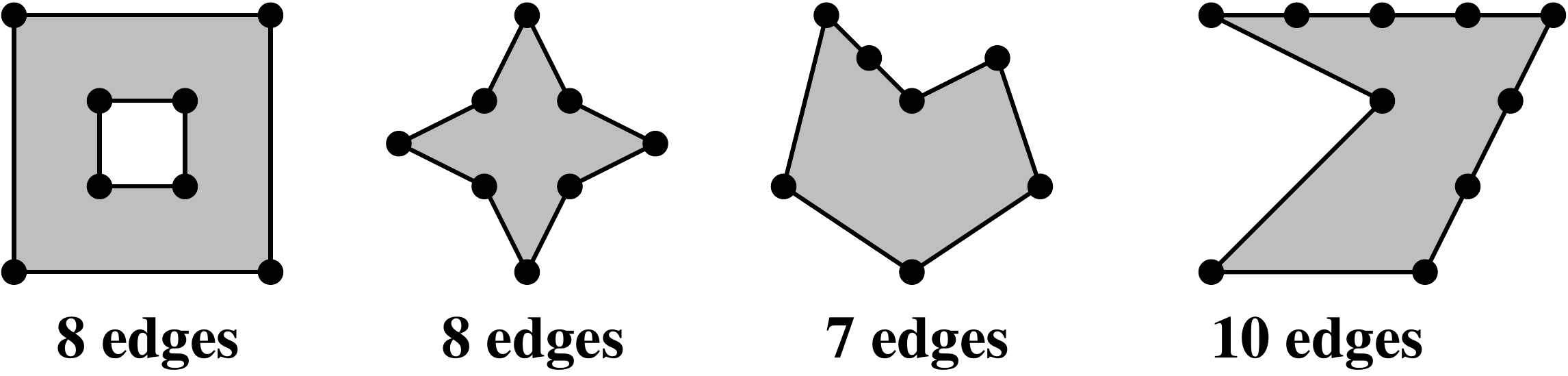}
  \end{center}
  {\caption{Element shapes allowed in our construction}\label{general}}
\end{figure}

For $k$ integer $\ge 1$  we define
\begin{equation}\label{deforig}
V_{k}(\E) =\{\varphi\in C^0(\overline{\E}):\,\varphi_{|e}\in\P_k(e)\mbox{ for all edge } e, \mbox{ and }\Delta\varphi\in
\P_{k-2}(E)\} .
\end{equation}
The degrees of freedom in $V_{k}(\E)$ are taken as
\begin{equation}\label{dofn-0}
\bullet\;\,\mbox{the values of $\varphi$ at the vertices},
\end{equation}
\begin{equation}\label{dofn-1}
\hskip1.8truecm\bullet  \int_e\varphi\, q\ds\quad \mbox{ for all }q\in\P_{k-2}(e)\quad\forall \mbox{ edge } e ,
\end{equation}
\begin{equation}\label{dofn-2}
\hskip0.2truecm\bullet \int_\E \varphi\, q\dE\quad \mbox{ for all }q\in\P_{k-2}(\E).
\end{equation}
It is immediate to verify that the degrees of freedom \eqref{dofn-0}-\eqref{dofn-2}
are unisolvent  (see \cite{volley}). For convenience of the reader we recall the proof. The number of degrees of freedom
in \eqref{dofn-0}-\eqref{dofn-2} is obviously equal to the dimension of  the space $V_k(\E)$ in
\eqref{deforig}: for a polygon of $N_e$ edges, they are both equal to $k\,N_e$ (number of boundary d.o.f.s) plus $\pi_{k-2,2}$ (dimension of $\P_{k-2}$ in two variables). Assume now that for a given
$\vphi\in V_k(\E)$ we have that all \eqref{dofn-0}-\eqref{dofn-2} are identically zero. Then
clearly $\vphi$ would be zero on the boundary (from \eqref{dofn-0}-\eqref{dofn-1}) and then using
\eqref{dofn-2} we would have $\int_{\E}|\nabla\vphi|^2\dE=-\int_{\E}\vphi\Delta\vphi\dE=0$
since $\Delta\vphi$ is a polynomial of degree $k-2$. This ends the proof.

The spaces $V_k(\E)$ are, in some sense, the {\it basic ones} in the VEM theory, similarly to, say, Lagrange finite elements on triangles for the FEM theory. However, compared with FEM (on triangles and on quadrilaterals) they show some differences, already in the number of {\it internal} degrees of freedom.

Comparing these (original) VEMs with the classical Finite Elements, whenever possible (meaning, here, for
triangular or quadrilateral elements) we find that on the boundary of the elements we have (or we can easily take) 
the same degrees of freedom. In the interior, however, this is not the case. In particular, on triangles, Virtual Elements have {\it more} degrees of freedom than the corresponding Finite Elements, and more precisely: the number of internal
degrees of freedom for Virtual Elements of degree $k$ is equal to $\pi_{k-2,2}$ while that of the corresponding Finite Elements is $\pi_{k-3,2}$ (see Figure \ref{clastri}).  For quadrilaterals, instead,  the number of internal nodes
for Finite Elements is equal to the dimension of ${\mathbb Q}_{k-2}$ (which is $(k-1)^2$),  while  for Virtual Elements the internal degrees of freedom equal the dimension of $\P_{k-2}$ (that is $k(k-1)/2$). See Figure \ref{clasqua}.

\begin{figure}[htbp]
  \begin{center}
    \includegraphics[width=7.7cm]{./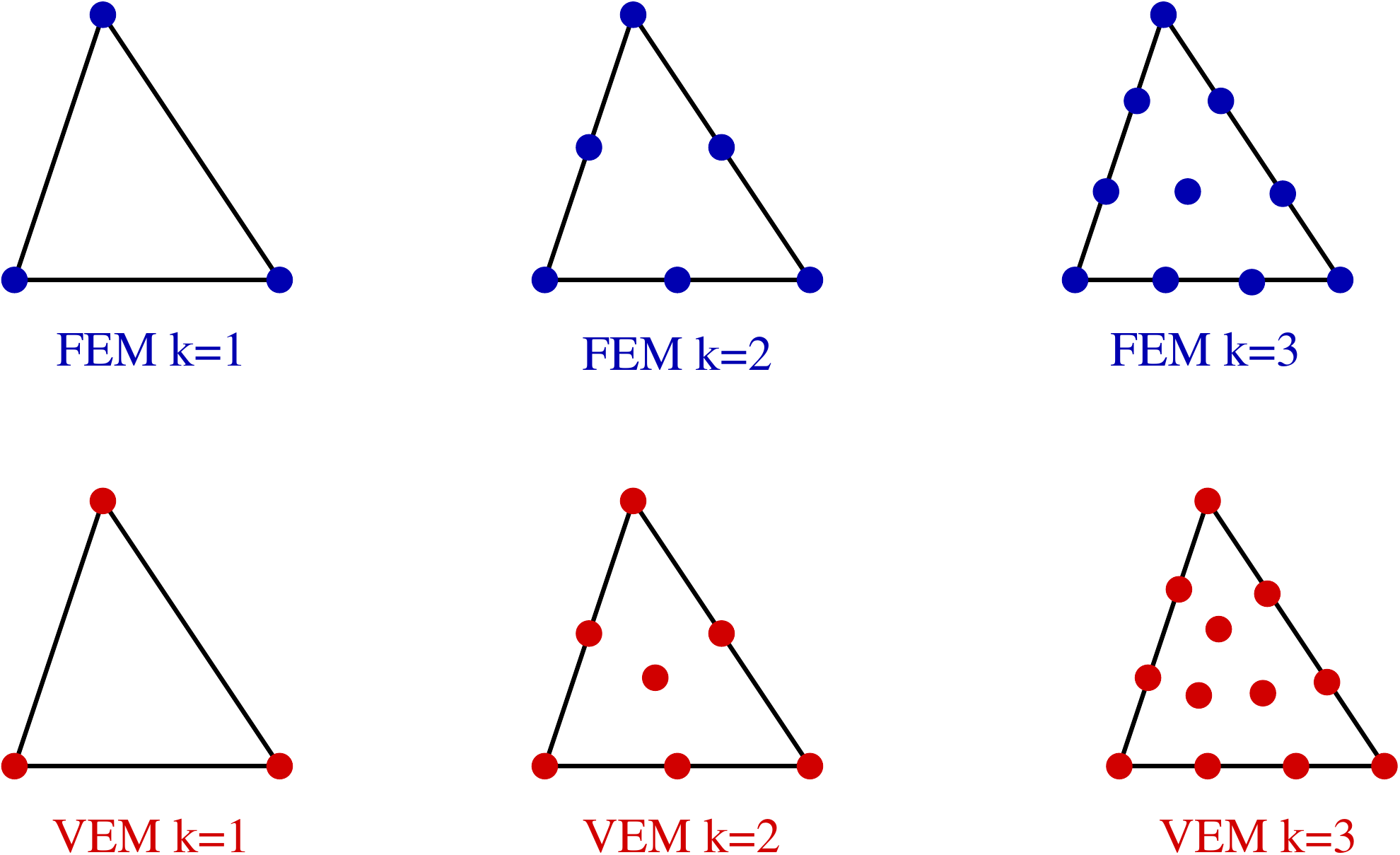}
  \end{center}
  {\caption{Triangles: Classical FEM and Original VEM}\label{clastri}}
\end{figure}

\begin{figure}[htbp]
  \begin{center}
    \includegraphics[width=7.7cm]{./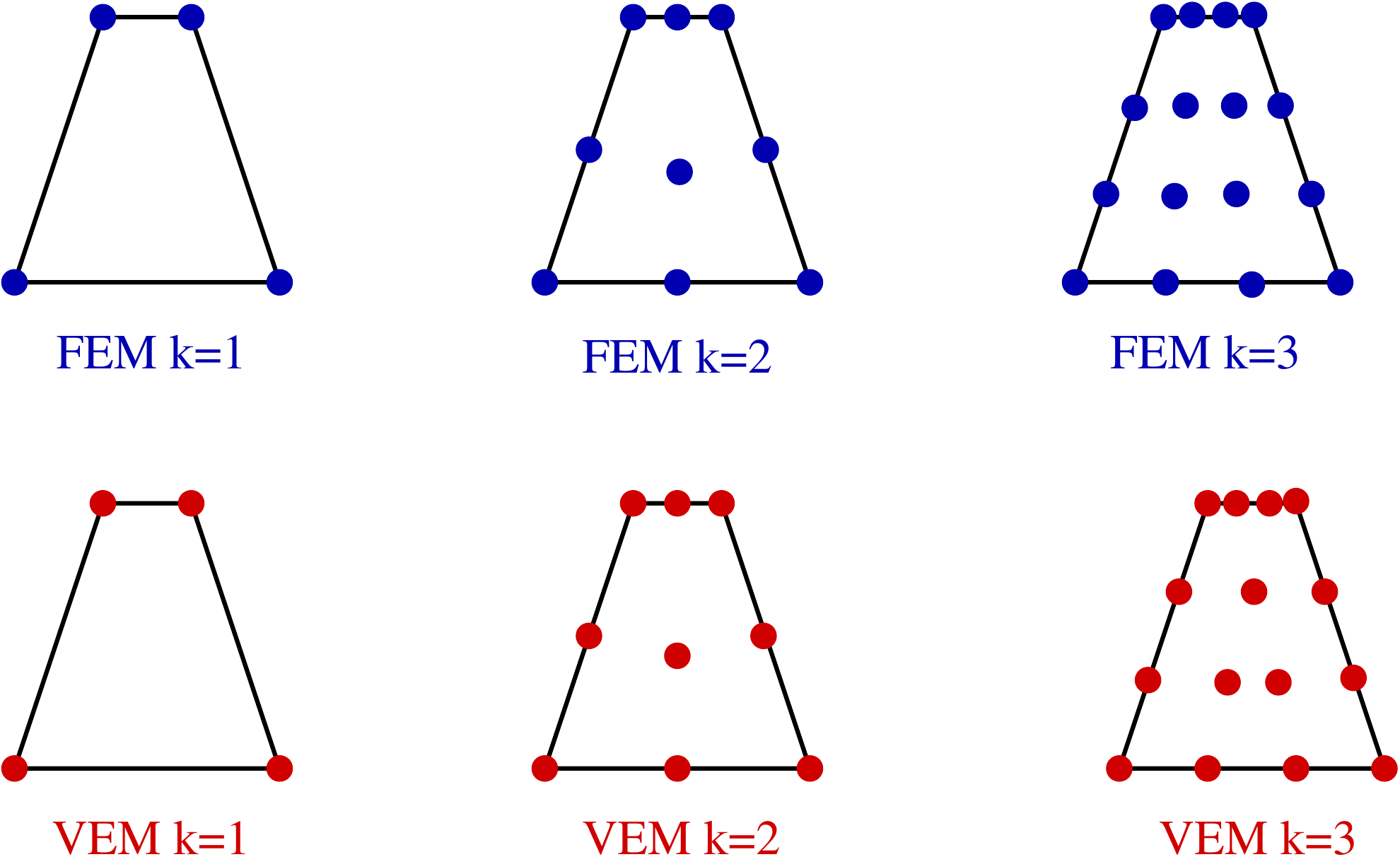}
  \end{center}
  {\caption{Quads: Classical ${\mathbb  Q}_k$-FEM and Original VEM}\label{clasqua}}
\end{figure}

\begin{remark} As we already mentioned, for the present 2-dimensional case  the restriction to each edge of Finite Elements
and of Virtual Elements is the same (both being polynomials of degree $\le k$ in
one dimension), so that we could actually allow a combined use of traditional Finite Elements (in some parts of the
computational domain) and of Virtual Elements (in other parts). 
\end{remark}

\begin{remark} In addition to the previous remark, we observe that for Virtual Elements we could very easily consider cases in which different degrees are used (say, in \eqref{dofn-1}) for different edges of the same polygon. In this case we note that: i) the order of accuracy on every polygon will be reduced to the lowest among the orders of the single edges, and
ii) in the global setting, to ensure conformity, the degrees of freedom on an edge shared by two polygons must obviously be the same. 
{This generalization could be, for instance, useful to develop $hp$ Virtual Elements in a very natural way.}

\end{remark}

Needless to say, the number of degrees of freedom for a given accuracy is not, by far, the whole story. One has to see what should be done with them;
  but this goes beyond the aims of the present paper.

\subsection{More general nodal VEMs}

For integers $k$  $\ge 1$ and $\kD \ge -1$ we define
\begin{equation}
V_{k,\kD}(\E) =\{\varphi\in C^0(\overline{\E}) :\,\varphi_{|e}\in\P_k(e)\,\forall\mbox{ edge } e, \mbox{ and }\Delta\varphi\in
\P_{\kD}(E)\} .
\end{equation}
The degrees of freedom in $V_{k,\kD}(\E)$ are taken as
\begin{equation}\label{dofn-0g}
\bullet\;\,\mbox{the values of $\varphi$ at the vertices} ,
\end{equation}
\begin{equation}\label{dofn-1g}
\hskip-0.3truecm\bullet \int_e\varphi\, q\ds\quad \mbox{ for all }q\in\P_{k-2}(e) ,
\end{equation}
\begin{equation}\label{dofn-2g}
\bullet \int_\E \varphi\, q\dE\quad \mbox{ for all }q\in\P_{\kD}(\E) .
\end{equation}
Clearly, the previous case \eqref{deforig} corresponds to the choice $\kD=k-2$. 

The extension of the previous unisolvence proof to the more general case of the degrees of freedom \eqref{dofn-0g}-\eqref{dofn-2g}  is an exercise. We also point out that, for $\kD\ge 0$ the degrees of freedom \eqref{dofn-2g} allow for the computation of the $L^2$-orthogonal projection operator $\Pi^0_{\kD}$, from $V_{k,\kD}(\E)$ to $\P_{\kD}(\E)$. As we shall see, the possibility
to compute this operator with an algorithm that uses only the degrees of freedom is one of the crucial steps in Virtual Element Methods.

We remark that the space $V_{k,\kD}(\E)$ clearly contains the space of polynomials $\P_s(\E)$ for
all $s\le {\min}\{k,\kD+2\}$, but $\Pi^0_r$ can be computed (out of the degrees of freedom), only
for $r\le \kD$. 

It is also clear that a smaller $\kD$ will correspond to a smaller number of degrees of freedom. However, as we have seen, for $\kD<k-2$ the space $V_{k,\kD}$ will fail to contain all polynomials of $\P_{k}$.

On the other hand, the choice $\kD=k$ would allow an immediate computation of the moments up to the order $k$, and hence the computation of the $L^2$-projection operator $\Pi^0_k$ that, as we said, is extremely useful. But for $\kD=k$ the degrees of freedom \eqref{dofn-2g} would
be very expensive.

Nevertheless, looking at Figure \ref{clastri}, we {\it feel} that there should be something better that can be done. To explain it, we start with some simple observations on polynomials that vanish on the boundary of a polygon. 

\subsection{Polynomials that vanish on $\partial\E$}

We start by noting that: {\it If a polynomial $p_k(x,y)$ of degree $\le k$ vanishes identically on a segment
(of positive length) that belongs to the straight line with equation, say, $ax+by+c=0$, then $p_k$ can be
written as $p_k=(ax+by+c)\,q_{k-1}$ with $q_{k-1}$ a polynomial of degree $\le k-1$}.  
The property is very well known, but if one needs more details we refer, for instance, to
 Lemma 3.1.10 of \cite{Brenner:Scott}.
 
As a consequence, a polynomial that vanishes identically on $\partial\E$ will contain, in its expression, the product of all the different straight lines that contain at least one edge of $\partial\E$. Note that even if several edges belong to the same line, (see for instance the fourth case in Figure \ref{general}) the equation of the line will always appear {\it once} (and not as many times as there are edges). For instance, looking again at the fourth case of Figure \ref{general}, we have ten edges but  we have to count only  five lines.

In general, given a polygon $\E$, we will denote by  $\eta_\E$ the number of distinct straight lines that contain at least one edge of $\E$. This is an important notation, that deserves to be better highlighted:
\begin{equation}\label{defeta}
\eta_\E=\mbox{ minimum number of straight lines needed to cover all } \partial\E.
\end{equation}

Having said that, we note that  for every
$k<\eta_E$ we obviously have
\begin{equation}\label{polynull}
\forall p_k\in\P_{k,2} \qquad \{p_k=0\mbox{ on }\partial\E\}\Longrightarrow \{p_k\equiv 0\}.
\end{equation}

With this, and noting that for every polygon $\E$ we always have $\eta_E\ge 3$,  it is not difficult to see that, for instance, a polynomial of degree $k \le 2$ is uniquely identified by its values at the boundary of any polygonal element $\E$.
 As a consequence, knowing the boundary value of a polynomial of degree $\le 2$ we know the whole polynomial, and hence we know its mean value (and, if needed, its moments of any degree). Why should we need
internal degrees of freedom?  

More generally, for $k\ge 3$ on triangles it is easy to see (looking for instance at the classical 
Finite Elements, see again Figure \ref{clastri})  that a polynomial of degree $\le k$ is uniquely identified by its boundary values and by its moments of degree $\le k-3$, and we shouldn't need  the moments of degree $k-2$. And on a more general polygon $\E$, with $\eta_\E>3$, the boundary values should count even more. So why should we need the moments of degree $k-2$?

A solution to this unsatisfactory situation could be found in a {\it reduction} of the VEM space similar to what is done in Finite Elements for quadrilaterals, with the introduction of the Serendipity elements.

\section{Serendipity Virtual Elements in $2$ dimensions}\label{sere2d}

To  fix ideas, and to keep things as simple as possible, we start from  the space $V_{k,k}(\E)$, although, as it will be clearer later on, other choices of the type $V_{k,k_\Delta}(\E)$ are  possible. We recall that if $\E$ has $N_e$ edges, then the dimension of the space will be $N_{\E}:=k\,N_e+\pi_{k,2}$.

\subsection{The property $\Sp$}
Now let us {\it assume} that we have chosen a positive integer $S$ with $\pi_{k,2}\le S\le N_\E$, and  that  the degrees of freedom in \eqref{dofn-0g}-\eqref{dofn-2g} are ordered as
$\delta_1,\,\delta_2,\,...\,\delta_{N_{\E}}$ in such a way that the first $S$ of them, that is{  
\begin{equation}\label{gliS}
\delta_1,\,\delta_2,\,...\,\delta_S
\end{equation}
have the following property: 
\begin{equation}\label{condS}
\!\!\!\!\!(\Sp)\quad\forall p_k\in\P_{k,2}(\E)\quad \{\delta_1(p_k)=\delta_2(p_k)=...=\delta_S(p_k)=0\}\Rightarrow
\{p_k\equiv 0\}.
\end{equation}

As it will become clearer in a while, the $S$ chosen degrees of freedom will be the ones
kept and used in the final system (the other ones being left,
in each element, as ``dummies''). 
 
As a consequence, in order to save the conformity of the whole space (defined on the whole computational domain) it will be always convenient to keep, among the first $S$ degrees of freedom, all the boundary ones \eqref{dofn-0g}-\eqref{dofn-1g}. For simplicity, we will consider only the case
in which this has been done, and we then {\it assume} that: 
\begin{equation}\label{BoundaryS}
\mbox{The d.o.f.s  $~~\delta_1,\,\delta_2,\,...\,\delta_{S}~$
contain all the boundary ones \eqref{dofn-0g}-\eqref{dofn-1g}.}
\end{equation}

In a certain number of cases the boundary degrees of freedom will be sufficient to give the property $\Sp$, but in other cases it will be necessary to add  some {\it internal} degrees of freedom from \eqref{dofn-2g}. The number of these additional  degrees of freedom  will  end up being equal to the number of {\it internal} degrees of freedom
that will be kept in our Serendipity Virtual Elements. Hence it is clear that property $\Sp$ in \eqref{condS} has a crucial relevance, and deserves a more detailed analysis. 

\subsection{Sufficient conditions for property $\Sp$}\label{3.2}

To start with, together with $\eta_{\E}$ it will also be convenient to introduce the
{\it basic bubble} $b_{\E}$ (or simply $b$), that is, the function  given by the product of the equations of the $\eta_{\E}$ different straight lines that contain all the edges of $\E$.

Using 
assumption \eqref{BoundaryS} we  note that a polynomial $p_k\in \P_k$ that satisfies \begin{equation}\label{dofzero}
\delta_1(p_k)=\delta_2(p_k)=...=\delta_S(p_k)=0
\end{equation}  
will be identically zero on all edges of $\partial\E$, and in particular its expression will contain the bubble $b_\E$  as a factor. We also recall that the degree of $b_\E$ is equal to  $\eta_\E$.  Then, in particular, we have
that  a polynomial $p_k$ that satisfies \eqref{dofzero} will necessarily have
the form $p_k=b_{\E} q_{k-\eta_{\E}}$ with $q_{k-\eta_{\E}}$ a polynomial of degree $k-\eta_{\E}$. We will consider, separately, several cases.

\noindent
$\bullet$ {\bf Case $k<\eta_E$}

From the above discussion
we deduce in particular the following result.  
\begin{prop}
For $k<\eta_\E$ assumption \eqref{BoundaryS} implies that property $\Sp$ is always satisfied.
\end{prop}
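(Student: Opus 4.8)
The plan is to derive property $(\Sp)$ as a direct consequence of the boundary-vanishing implication \eqref{polynull}, which is already available in the regime $k<\eta_\E$. Concretely, I would take an arbitrary $p_k\in\P_{k,2}(\E)$ satisfying $\delta_1(p_k)=\dots=\delta_S(p_k)=0$ and show that it must be the zero polynomial.

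First I would exploit assumption \eqref{BoundaryS}: since the first $S$ degrees of freedom include all the boundary ones \eqref{dofn-0g}--\eqref{dofn-1g}, the hypothesis forces all vertex values of $p_k$ and all its edge moments against $\P_{k-2}(e)$ to vanish. On a fixed edge $e$ the restriction $p_k|_e$ is a one-variable polynomial of degree $\le k$, living in a $(k+1)$-dimensional space; the two endpoint values together with the $k-1$ moments against $\P_{k-2}(e)$ constitute exactly $k+1$ unisolvent conditions (this is precisely the edge part of the classical VEM unisolvence recalled after \eqref{dofn-2}). Hence $p_k|_e\equiv0$ on every edge, so $p_k$ vanishes identically on $\partial\E$.

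At this point I would simply invoke \eqref{polynull}: because $k<\eta_\E$, a polynomial of degree $\le k$ vanishing on $\partial\E$ is necessarily the zero polynomial, whence $p_k\equiv0$. Equivalently --- and this is the mechanism underlying \eqref{polynull} --- since $p_k$ vanishes on $\partial\E$ its expression must contain the basic bubble $b_\E$ as a factor, i.e.\ $p_k=b_\E\,q_{k-\eta_\E}$; but $\deg b_\E=\eta_\E>k$ forces $q_{k-\eta_\E}$, and thus $p_k$, to be identically zero. This establishes $(\Sp)$.

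I do not expect a genuine obstacle: the proposition is essentially a corollary of the discussion immediately preceding it. The only step deserving a word of care is the one-dimensional unisolvence on each edge, but that is entirely classical and is exactly the ingredient used in the original unisolvence proof; the geometric content --- boundary vanishing forcing global vanishing in the low-degree range --- is wholly contained in the factorization cited as Lemma~3.1.10 of \cite{Brenner:Scott}.
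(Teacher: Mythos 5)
Your proof is correct and follows exactly the paper's (implicit) argument: the boundary degrees of freedom force $p_k$ to vanish on $\partial\E$, hence $p_k=b_\E\,q_{k-\eta_\E}$, and $\deg b_\E=\eta_\E>k$ then forces $p_k\equiv 0$, which is precisely the content of \eqref{polynull} and the discussion preceding the proposition. Your extra care in spelling out the one-dimensional edge unisolvence is a detail the paper leaves implicit, but it is the same proof.
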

We then split the analysis of the case $k\ge\eta_\E$ in two cases.

\noindent
$\bullet$ {\bf Case $k\ge\eta_E$ and $\E$ convex}

For values of $k\ge\eta_{\E}$, together with the boundary degrees of freedom, we  would need in  general some additional internal ones. In particular we  have the following result.
\begin{prop}
Assume that $k\ge \eta_\E$,  that $\E$ is convex, and that assumption \eqref{BoundaryS} is satisfied. Assume moreover that  the degrees of freedom $~~\delta_1,\,\delta_2,\,...\,\delta_{S}~$ include  all the moments of order $\le k-\eta_{\E}$ in $\E$ as well.  Then property $\Sp$ is satisfied. 
\end{prop}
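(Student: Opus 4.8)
The plan is to show that any $p_k\in\P_{k,2}(\E)$ killed by $\delta_1,\dots,\delta_S$ must vanish identically, by combining the factorization already derived in the text with a sign argument for the bubble that is valid precisely because $\E$ is convex.

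First I would exploit assumption \eqref{BoundaryS}. Since $\delta_1,\dots,\delta_S$ contain all the boundary degrees of freedom \eqref{dofn-0g}--\eqref{dofn-1g}, the hypotheses $\delta_1(p_k)=\dots=\delta_S(p_k)=0$ force $p_k$ to vanish on every edge of $\partial\E$: on each edge $p_k$ restricts to a one-variable polynomial of degree $\le k$ whose two vertex values and whose moments up to order $k-2$ all vanish, hence it is identically zero there. By the discussion preceding the statement, a polynomial vanishing on all of $\partial\E$ is divisible by $b_\E$ (which has degree $\eta_\E$), so we obtain the factorization $p_k=b_\E\,q_{k-\eta_\E}$ with $q_{k-\eta_\E}$ a polynomial of degree $\le k-\eta_\E$; note that the degree count needs $k\ge\eta_\E$, which is exactly the standing hypothesis.

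Next I would bring in convexity to pin down the sign of the bubble. Because $\E$ is convex it is the intersection of the $\eta_\E$ closed half-planes bounded by the distinct lines whose product defines $b_\E$; orienting each linear equation so that it is nonnegative on $\E$, every factor is strictly positive in the interior of $\E$, and therefore $b_\E>0$ throughout $\mathring{\E}$. Finally I would test the remaining (moment) conditions against $q_{k-\eta_\E}$ itself: since its degree is $\le k-\eta_\E$ and, by the extra assumption, all moments of order $\le k-\eta_\E$ are among $\delta_1,\dots,\delta_S$, we get
\[
0=\int_\E p_k\,q_{k-\eta_\E}\dE=\int_\E b_\E\,q_{k-\eta_\E}^2\dE .
\]
As $b_\E>0$ in the interior and $q_{k-\eta_\E}^2\ge0$, the integrand is nonnegative and its integral vanishes, forcing $q_{k-\eta_\E}\equiv0$ and hence $p_k=b_\E\,q_{k-\eta_\E}\equiv0$, which is property $\Sp$.

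The main obstacle is the sign-definiteness of $b_\E$, and this is the one place where convexity is genuinely used: on a non-convex polygon some factor changes sign inside $\E$, so $b_\E$ is no longer one-signed, the quadratic-form identity above ceases to control $q_{k-\eta_\E}$, and the argument breaks down. That non-convex situation must be treated separately, presumably by enlarging the set of internal moments used in the test.
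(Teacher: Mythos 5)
Your proof is correct and takes essentially the same route as the paper's: assumption \eqref{BoundaryS} forces $p_k$ to vanish on $\partial\E$, giving the factorization $p_k=b_\E\,q^*_{k-\eta_\E}$, and testing the internal moments with $q^*_{k-\eta_\E}$ itself yields $0=\int_\E b_\E\,(q^*_{k-\eta_\E})^2\dE$, which kills $p_k$ because $b_\E$ is one-signed on the convex $\E$. Your extra details (checking edge-by-edge why the boundary d.o.f.s annihilate $p_k$, and orienting each linear factor so that $b_\E>0$ in the interior) merely make explicit what the paper states in one line, so the two arguments coincide.
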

\begin{proof} We first note that if $\E$ is {\it convex} then $b_{\E}$ will not change sign inside $\E$.   Hence,  if $p_k$ vanishes on $\partial\E$ (and hence $p_k=b_{{\E}}q^*_{k-\eta_{\E}}$) and if moreover
\begin{equation}\label{ke00}
\int_{\E}p_k\,q\dE =0\quad \forall q\in \P_{k-\eta_{\E}},
\end{equation}
then it is enough to take $q=q^*_{k-\eta_{\E}}$ in \eqref{ke00} to deduce that 
\begin{equation}\label{ke00b}
0=\int_{\E}p_k\,q^*_{k-\eta_{\E}}\dE\, = \, \int_{\E}b_{\E}\,(q^*_{k-\eta_{\E}})^2\dE\quad\mbox{and therefore $p_k=0$}.
\end{equation}
\end{proof}

From the two above propositions we see in particular that: for $k=2$ we will never need internal moments (for any shape of $\E$) and property $\Sp$ will always hold; for $k=3$ we will need the mean value only when $\eta_E=3$, and no internal d.o.f.s for a bigger $\eta_\E$; for $k=4$ we will need all the moments up to the degree $1$ for $\eta_E=3$,  but only the mean value when $\eta_E=4$ {\it and } $\E$ is convex . And so on.

\noindent
$\bullet$ {\bf Case $k\ge\eta_E$ and $\E$ non convex}

The case of non-convex polygons, for $k\ge\eta_{\E}$, is more tricky. For instance if $\E$ is a non convex quadrilateral (as the third case in  Figure \ref{dege}), then $b_\E$ will indeed change sign in $\E$, and the  argument in \eqref{ke00b} will not apply. However, indicating by  $w_2$ the second degree polynomial made by the product of the equations of the two ``re-entrant'' edges, it is easy to check that  the product  $b_\E w_2$  does not change sign inside $\E$ (as the equations of the re-entrant edges will be taken {\it  twice}). The same will obviously be true for more general polygons, whenever we have only two re-entrant edges (as, for instance the  fourth element in Figure \ref{general}). Actually what counts is the number of {\it re-entrant lines}, as in the third example of Figure \ref{general}.  For the sake
of simplicity, however, we restrict ourselves to the case of two re-entrant edges, and present the following result.

\begin{prop}\label{nonconv}
Assume that $k\ge \eta_\E$,  that assumption \eqref{BoundaryS} is satisfied,  and that $\E$ has only two
``re-entrant edges''. Let $w_2$ be the second degree polynomial made by the product of the equations of the two ``re-entrant'' edges.  Assume moreover that  the degrees of freedom $~~\delta_1,\,\delta_2,\,...\,\delta_{S}~$ include also all the moments   
\begin{equation}\label{ke000}
\int_{\E}p_k\,q\,w_2\dE \quad \forall q\in \P_{k-\eta_{\E}}.
\end{equation}
Then property $\Sp$ is satisfied. 
\end{prop}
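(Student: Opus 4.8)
The plan is to repeat, almost verbatim, the argument of the preceding (convex) proposition (see \eqref{ke00}--\eqref{ke00b}), with the bubble $b_\E$ replaced by the product $b_\E\,w_2$, which is precisely the quantity engineered to be sign-definite. First I would extract the structural form of a candidate polynomial: by assumption \eqref{BoundaryS} the first $S$ degrees of freedom contain all the boundary ones, so any $p_k\in\P_{k,2}$ annihilated by $\delta_1,\dots,\delta_S$ vanishes on every edge of $\partial\E$, and by the factorization recalled just before the statement this forces
\begin{equation*}
p_k=b_\E\,q^*_{k-\eta_\E}\qquad\text{for some }q^*_{k-\eta_\E}\in\P_{k-\eta_\E,2}.
\end{equation*}
This is the same starting point as in the convex case; the only obstruction is that here $b_\E$ changes sign inside $\E$, so the earlier argument cannot be applied directly.

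The key step I would isolate is the sign analysis of $b_\E\,w_2$. Writing $b_\E=\ell_1\cdots\ell_{\eta_\E}$ as the product of the $\eta_\E$ distinct edge-line equations and letting $\ell_1,\ell_2$ be the two re-entrant ones, so that $w_2=\ell_1\ell_2$, one obtains
\begin{equation*}
b_\E\,w_2=\ell_1^2\,\ell_2^2\prod_{i\ge 3}\ell_i .
\end{equation*}
The squared factors $\ell_1^2,\ell_2^2$ are nonnegative, while each remaining line $\ell_i$ ($i\ge 3$) carries a non-re-entrant edge and therefore leaves $\E$ entirely on one of its sides; hence $\prod_{i\ge 3}\ell_i$ keeps a constant sign on $\E$. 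Consequently $b_\E\,w_2$ is of one (weak) sign throughout $\E$ and, being a nonzero polynomial, is nonzero almost everywhere. I expect this verification — identifying precisely the re-entrant lines as the sign-changing factors, and genuinely using the hypothesis that there are \emph{exactly two} of them — to be the only delicate point of the proof.

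Finally I would invoke the extra moments \eqref{ke000}. Note first that $q\,w_2\in\P_{k-\eta_\E+2,2}\subseteq\P_{k,2}$ (since $\eta_\E\ge 3$), so \eqref{ke000} are indeed admissible moments of the type \eqref{dofn-2g} and may legitimately be placed among the first $S$ degrees of freedom. Taking $q=q^*_{k-\eta_\E}$ in \eqref{ke000} and substituting $p_k=b_\E\,q^*_{k-\eta_\E}$ gives
\begin{equation*}
0=\int_\E p_k\,q^*_{k-\eta_\E}\,w_2\dE=\int_\E (b_\E\,w_2)\,\big(q^*_{k-\eta_\E}\big)^2\dE .
\end{equation*}
Since $b_\E\,w_2$ has constant sign and $(q^*_{k-\eta_\E})^2\ge 0$, the integrand is sign-definite and must therefore vanish almost everywhere; as $b_\E\,w_2\ne 0$ a.e., this forces $q^*_{k-\eta_\E}\equiv 0$, whence $p_k\equiv 0$. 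This establishes property $\Sp$ and completes the proof.
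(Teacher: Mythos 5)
Your proof is correct and follows essentially the same route as the paper's: vanishing boundary d.o.f.s give the factorization $p_k=b_\E\,q^*_{k-\eta_\E}$, the test function $q=q^*_{k-\eta_\E}$ in the weighted moments \eqref{ke000} yields $\int_\E b_\E\,w_2\,(q^*_{k-\eta_\E})^2\dE=0$, and the sign-definiteness of $b_\E w_2$ (which you usefully make explicit by writing $b_\E w_2=\ell_1^2\ell_2^2\prod_{i\ge3}\ell_i$, the paper only asserting it in the discussion preceding the proposition) forces $p_k\equiv 0$. The only cosmetic difference is the admissibility check: the paper observes that two re-entrant edges force $\eta_\E\ge 4$, so that $q\,w_2$ has degree $\le k-2$ and \eqref{ke000} sits among the original d.o.f.s \eqref{dofn-2}, whereas you verify only the weaker inclusion in $\P_{k,2}$, which still suffices in the ambient space $V_{k,k}(\E)$.
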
 
\begin{proof} We remark first that if $\E$ has two re-entrant corners then $\eta_\E\ge 4$, and therefore  $k-\eta_\E+2$ (the degree of the test function $q\,w_2$ in \eqref{ke000}) is $\le k-2$, so that the degrees
of freedom in \eqref{ke000} are still part of the degrees of freedom  \eqref{dofn-2} in $V_k(\E)$. Then, let $p_k$
be a polynomial of degree $\le k$ vanishing on $\partial\E$ and such that
\begin{equation}\label{degeq0}
\int_{\E}p_k\,q\,w_2\dE=0 \quad \forall q\in \P_{k-\eta_{\E}}.
\end{equation}
We first deduce, as before, that $p_k=b_{\eta_{\E}}q^*_{k-\eta_{\E}}$ for some $q^*_{k-\eta_{\E}}\in
\P_{k-\eta_{\E}}$. Then we take $q=q^*_{k-\eta_{\E}}$ in \eqref{degeq0} to get
\begin{equation}\label{ke00bb}
0=\int_{\E}p_k\,w_2\,q^*_{k-\eta_{\E}}\dE\, = \, \int_{\E}b_{\E}\,w_2\,(q^*_{k-\eta_{\E}})^2\dE,
\end{equation}
that implies again $p_k=0$ since $b_{\E}\,w_2$ does not change sign in $\E$.
\end{proof}

So far we discussed (long enough) the cases in which assumption $\Sp$ holds true, or it does not. It is now time to see some of its consequences.

\subsection{The operator $\Pi^\mathcal{S}_k$}

As we shall see in a little while, given a set of degrees of freedom $\delta_1,\,\delta_2,\,...\,\delta_S$ (subset of
 \eqref{dofn-0g}-\eqref{dofn-2g}) that satisfy property $\Sp$ (see \eqref{condS}), it will always be possible to construct an operator $\Pi^\mathcal{S}_k$ from  $V_{k,k}(\E)$ to $\P_k(\E)$ with the following properties:

\begin{equation}\label{propR1}
\bullet \mbox{ $\Pi^\mathcal{S}_k$ is computable using only the d.o.f.  $\delta_1,...,\delta_{S}$} ,
\end{equation}and
\begin{equation}\label{propR2}
\bullet \mbox{ $\Pi^\mathcal{S}_k  q_k = q_k$ for all $q_k\,\in\P_{k}$} .
\end{equation}

\subsection{The reduced (Serendipity) VEM spaces}

Once the operator $\Pi^\mathcal{S}_k$ has been defined, we can use it to construct our Serendipity VEM spaces. The basic idea can be summarized as follows.
\begin{itemize} 
\item we work in $V_{k,k}(\E)$,
\item for each $\vphi\in V_{k,k}(\E)$ we use the first $S$ degrees of freedom
to construct $\Pi^\mathcal{S}_k \vphi$, 
\item then we use $\delta_r(\Pi^\mathcal{S}_k\vphi)$, for $S<r\le N_\E$ to define the values of the remaining $N_{\E}-S$ degrees of freedom  in $V_{k,k}(\E)$ . 
\end{itemize}

In other words, given $\vphi\in V_{k,k}(\E)$ we construct another element (say, $\widetilde{\vphi}$)
such that 
\begin{equation}\label{S1}
\delta_r(\widetilde{\vphi})=\delta_r(\vphi) \,\mbox{ for }\,(1\le r\le S),
\end{equation}
and
\begin{equation}\label{S2}
 \delta_r(\widetilde{\vphi})=\delta_r(\Pi^\mathcal{S}_k\vphi) \,\mbox{ for }\,(S+1\le r\le N_\E) .
\end{equation}
Clearly,  the elements $\vphi\in V_{k,k}(\E)$ such that $\widetilde{\vphi}=\vphi$ form the space 
\begin{equation}\label{VEMS2d}
V^S_k(\E)=\{\vphi\in V_{k,k}(\E)\mbox{ s. t. }\delta_r(\vphi)=\delta_r(\Pi^\mathcal{S}_k\vphi)\;\forall r = S+1,...,N_{\E}\},
\end{equation}
that we identify as our {\it reduced (Serendipity) Virtual Element Space}. It is immediate to see that the space
$V^S_k(\E)$ has the following properties:
\begin{itemize}
\item the dimension of $V^S_k(\E)$ is $S$, 
\item $\delta_1,...,\delta_{S}$ is a unisolvent set of degrees
of freedom for $V^S_k(\E)$, 
\item  $\P_{k,2}(\E)\subseteq V^S_k(\E)$,
\item the $L^2$-projection $\Pi_k^0$ is computable from the d.o.f. of  $V^S_k(\E)$.
\end{itemize}
It is also immediate to see that  {\bf for triangles}  the new spaces
$V^S_k(\E)$ have now the same number of degrees of freedom as the classical Lagrange Finite Elements, and are,
actually, the same spaces, since $\P_{k,2}(\E)$ and $ V^S_k(\E)$ have the same dimension. See Figure \ref{seretri}. 
\begin{figure}[!h]
  \begin{center}
    \includegraphics[width=7.7cm]{./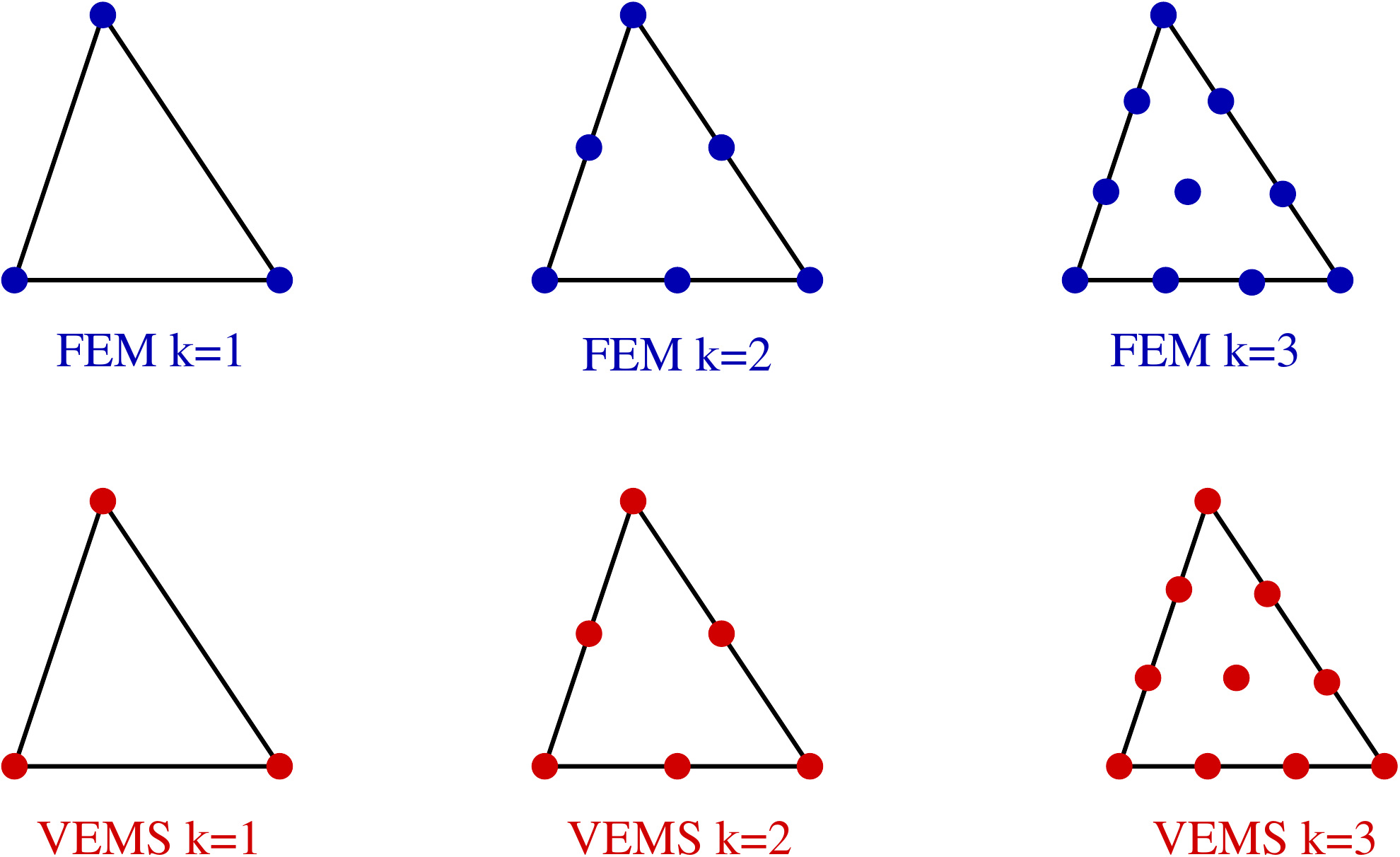}
    \end{center}
   {\caption{Triangles: Classical FEM and Serendipity VEM}\label{seretri}}
\end{figure}

 \bigskip 

Serendipity Finite Elements on quadrilaterals are in general defined on squares and on their affine images
(that is, on parallelograms),
while their extension to more general quadrilaterals (via isoparametric mappings) suffers, in general, a loss
of accuracy (see e.g. \cite {A-B-F-nod}).

{\bf For parallelograms}, our Serendipity Virtual Elements have  the same number of degrees of freedom as the Serendipity Finite Elements: for a general $k$ both  use the boundary degrees of freedom plus the internal moments of degree $\le k-4$, although, in general, with a different space. 

{\bf For more general quadrilaterals} Serendipity Virtual Elements and Serendipity Finite Elements have again
the same number of degrees of freedom (see Figure \ref{serequa}, and, for instance, papers
 \cite{A-A-Sere} or \cite{F-G-Sere}), although Finite Elements allow much less general distortions, and even for small deviations from parallelograms show a lack of accuracy that disappears only if the mesh  (progressively, as the mesh-size $h$ goes to zero) tends to be made of parallelograms
(see \cite{A-B-F-nod}).
  \begin{figure}[!h]
  \begin{center}
    \includegraphics[width=7.7cm]{./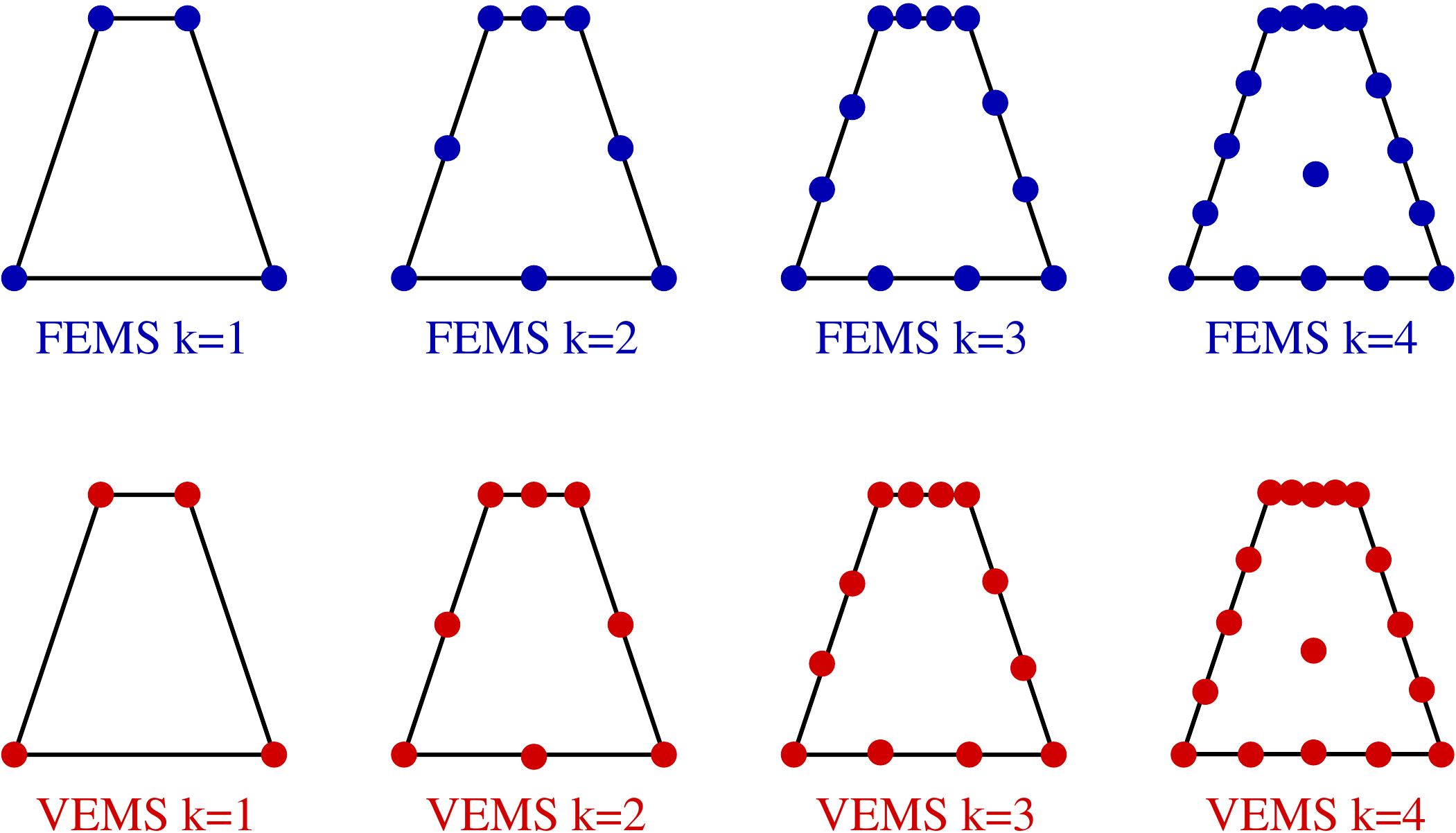}
  \end{center}
  {\caption{Quads: S-FEM (Arnold-Awanou) and S-VEM}\label{serequa}}
\end{figure}
On the other hand, Virtual Elements are extremely robust, and can survive several types of severe
distortion. The only degeneration that  must be avoided, in the present context,  occurs, clearly, when two edges fit in the same straight line (as,  for instance, in the second example of  Figure \ref{dege}).  But even when the element degenerates to a triangle  we could still  survive in a cheap-and-easy way, just by using also the internal moments of degree  up to $k-3$. Clearly, for stability reasons, when two edges are {\it almost on the same straight line} it would still be wise to use also the moments of degree $k-3$. Hence we can say that for  quadrilateral elements we have  the same number of degrees of freedom that Serendipity Finite Elements use on affine elements, but our construction works in much more general cases, using a different space that is more robust to distortions. 
 In Figure \ref{dege} we show some example of allowed distortions. In the first case depicted, only moments of degree up to $k-4$ need to be included, while in the second case also the moments of degree $k-3$ are needed. In the third case we can use moments of degree up to $k-4$ with the quadratic multiplicative factor defined in \eqref{ke000}.

\begin{figure}[htbp]
  \begin{center}
    \includegraphics[width=7.7cm]{./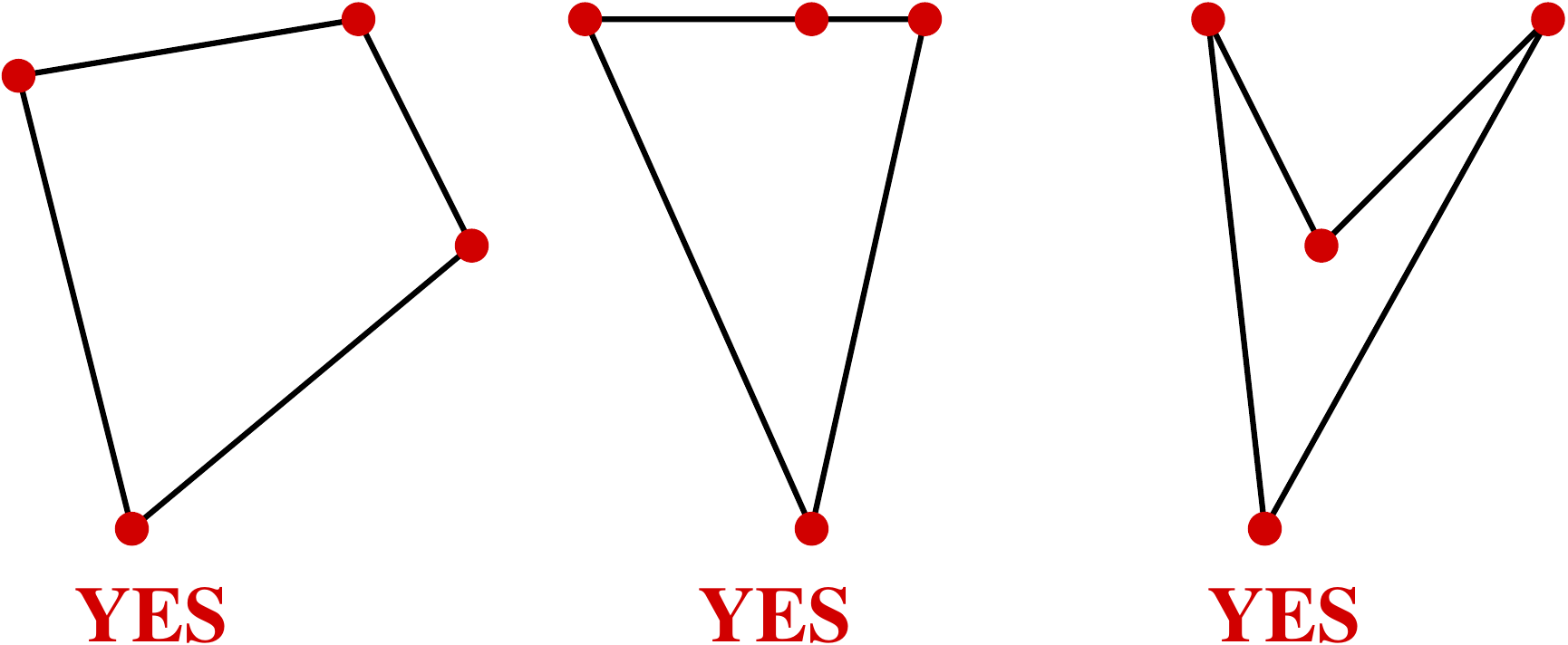}
  \end{center}
  {\caption{Allowed distortions for quadrilaterals}\label{dege}}
\end{figure}

Finally it is still worth mentioning that Serendipity VEM can also be defined (and perform very well) on {\bf much more general
polygons} where Serendipity Finite Elements (as well as classical Finite Elements) do not exist.

\subsection{Construction of $\Pi^\mathcal{S}_k$}\label{costR}

There is just one item that we have to detail in order to complete the description of the nodal Serendipity Virtual Elements on polygons: the construction of the operator $\Pi^\mathcal{S}_k$ starting from a set of degrees of freedom that satisfy property $\Sp$. For this, we assume that, for a given $k$, we are given  a set $\delta_1,\,\delta_2,\,...\,\delta_S$ of degrees of freedom having the property $\Sp$, and we define the operator $\calD$
\begin{equation}
\calD: V_{k,k}(\E)\rightarrow\,\R^S\;\mbox{ defined by }\;
\calD\vphi:=(\delta_1(\vphi),...,\delta_S(\vphi)).
\end{equation}
Needless to say, the operator $\calD$ will have the properties: 
\begin{equation}\label{propD1}
\bullet \mbox{ $\calD$ can be computed using only the d.o.f  $\delta_1,...,\delta_{S}$},
\end{equation}
\begin{equation}\label{propD2}
\hskip-3.1truecm\bullet\,\,\calD\, q=0\Rightarrow q=0\quad\mbox{ for all }q\in\P_{k}.
\end{equation}
Property \eqref{propD1} is trivial, and property \eqref{propD2} is  inherited by \eqref{condS}.

We observe that, for coding purposes, the operator $\calD$ corresponds to take the first $S$ rows of the matrix $\bf D$ given in \cite{hitchhikers}, formula (3.17).

We are now going to use $\calD$ to construct $\Pi^\mathcal{S}_k$ as follows: for every $\varphi\in V_{k,k}(\E)$   we can define $\Pi^\mathcal{S}_k\,\varphi\in\P_k$ through
\begin{equation}\label{defR}
(\calD(\Pi^\mathcal{S}_k\varphi-\varphi),\calD q)_{\R^{S}}=0\quad\forall q\in \P_k,
\end{equation}
where $(\cdot\, ,\,\cdot)_{\R^{S}}$ is the {\it Euclidean scalar product} in $\R^{S}$
(or, if more convenient, any positive definite symmetric bilinear form on $\R^{S}$). Property \eqref{propD2} ensures that the matrix
\begin{equation}
(\calD p,\calD q)_{\R^{S}} \qquad p,\,q\in \P_k
\end{equation}
is nonsingular, so that for every right-hand side $(\calD \varphi,\calD q)_{\R^{S}}$
the linear system \eqref{defR} in the unknown $\Pi^\mathcal{S}_k\,\varphi$ will have a unique solution. 
It is an easy exercise to check that the operator $\Pi^\mathcal{S}_k $, as defined in \eqref{defR}, satisfies the required properties \eqref{propR1}-\eqref{propR2}. 

\subsection{Different options}

 We first point out that, in our presentation, the
reason why we delayed the construction of the operator $\Pi^\mathcal{S}_k$ is the presence, in
its construction, of an excessive freedom. Indeed, there are zillions of possible
choices for the basic degrees of freedom to be used (in the construction of the
operator $\calD$) and zillions of possible choices for the symmetric and positive
definite bilinear form to be used (if convenient) in place of the
{\it Euclidean scalar product} $(\cdot\, ,\,\cdot)_{\R^{S}}$ in $\R^{S}$. In
principle, the presence of many choices could allow a strategy toward a final space
with suitable properties (we shall see an example later on). But in many cases the
presence of too many options is more a drawback than an advantage.

We did not consider so far the
{\it scaling} and {\it stability} problems. As pointed out in several occasions
(actually, almost everywhere) in the VEM literature, it is (much) wiser to use
degrees of freedom that {\it scale} in the same way. Otherwise (for instance) the
choice of the Euclidean scalar product should not be recommended, since  degrees of
freedom that scale differently should be treated in different ways.

It should be said, however, that the situation is not as bad as it could seem.
Indeed, once we took care of choosing degrees of freedom that scale in the same way,
the methods show a remarkable robustness, and the use of
the Euclidean scalar product, or of the Euclidean scalar product multiplied or
divided by 10, or of other similar bilinear forms, would end up in equally good
final schemes.

\subsection{The lazy choice and the stingy choice}\label{stingy-lazy}
 We have seen that, for an order of accuracy $k$, and
for a polygon { (for simplicity, convex)} whose edges belong to $\eta_E$ different straight lines, 
in our serendipity spaces only internal moments up to the degree $k-\eta_E$ can be used.
We also pointed out that, however, for stability reasons one should also  take care
of the cases where two (or more) edges belong {\it almost} to the same straight
line, and consider them {\it as actually belonging to the same straight line}. This
would decrease  the number $\eta_E$ for the polygon, and increase the number $k-\eta_E$
of moments to be used. An additional difficulty, with this choice, would then be to
decide the precise meaning of the above
term ``{\it almost}'', for instance in terms of the angle between the two ({\it
almost} coincident) straight lines that contain the  two (or more) edges under
scrutiny.


 In light of the above discussion  (and always  for a
given fixed order of accuracy $k$) we see that, in the actual implementation  of a
code in which many different shapes of polygonal elements are expected, one faces a very important choice. {\it A first possibility} (let us call it, {\bf the
stingy choice}) would be: to fix a minimum angle $\theta_0>0$ and then, for every
polygon $\E$, to count the number $\eta_E(\theta_0)$ 
of different straight lines
that
contain all the edges of $\E$, by considering ``different from each other'' two
straight lines only when the smaller angle between them is bigger than $\theta_0$. Then,
use moments up to the order $k-\eta_E(\theta_0)$ as degrees of freedom inside $\E$.
{\it Another  possibility} (let us call it, {\bf the lazy choice})  would be to  use always internal moments of
degree up to $k-3$, since our assumptions imply that $\eta_E(\theta_0)$ is always $\ge 3$ for $\theta_0$ small enough compared to $\rho_0$ (say, for $\rho_0\ge\tan(\theta_0/2) $).
Needless to say, many strategies in between are
possible, and the choice among all of them would depend on the type of code one is
writing, and on the use one wants to make of it. We shall come back to this problem
when dealing with the three-dimensional case.

\section{Serendipity Virtual Elements in $3$ dimensions}\label{sere3d}


Let us consider now the case of three-dimensional VEM.  Again, for the sake of simplicity, we will make some simple assumptions on the geometry of our elements. In particular we will consider the typical assumption (see for instance \cite{projectors}): there exists a fixed number $\rho_0>0$, independent of the decomposition, such that for every polyhedron $\PP$ (with diameter $h_\PP$) we have that: {\it i)} $\PP$ is star-shaped with respect of all the points of a ball of radius $\rho_0\,h_\PP$, {\it ii)} every edge $e$ of $\PP$ has length $|e|\ge \rho_0\,h_\PP$, and {\it iii)} every face $f$ is star-shaped with respect of all the points of a ball of radius $\rho_0\,h_\PP$. Here too, more general assumptions could be allowed  but again this goes beyond the scope of the present paper.  See for instance \cite{projectors}. 

As we did for the two-dimensional case, we shall concentrate on the choice of the spaces on a single polyhedron $\PP$.

Moreover, still to keep things as simple as possible, we assume that, in the terminology
of Subsection \ref{stingy-lazy} , we follow for every {\it face}  the {\bf lazy} choice.

\subsection{Polynomials that vanish on $\partial\Pt$}

We point out that, for the faces of a three-dimensional decomposition, the difference between the two choices (stingy and lazy) would be decidedly more dramatic than in two dimensions. Indeed, 
for 2D-decompositions the degrees of freedom internal to the elements could always be eliminated (easily and  cheaply) by {\bf static condensation}. But in three dimensions the degrees of freedom internal to faces cannot be (easily and cheaply) eliminated by static condensation, and in general they still appear in the final (global) stiffness matrix. The difference would become more and more expensive for higher choices of the accuracy $k$. To make an example, for $k=8$ on an hexagonal face $f$ (with $\eta_f=6$) the lazy choice would require the use of all the moments of degree up 
to $8-3$ (that is, $21$ d.o.f.) while the stingy choice would require only the moments of degree up to $8-6$ (that is, $6$ d.o.f.). Hence, the systematic use 
of the lazy choice on all faces (as done here) is more a way of keeping the presentation simple rather than a suggestion on what to do in a practical code. Indeed, for higher order of accuracy and for decompositions in which many faces have (each) many edges, we would {\bf not} recommend the lazy choice, which could be much more expensive.
We think, however, that once the basic idea is understood it will be quite immediate for the users to see how and when to shift from the lazy choice to more cheap ones.

We then take  an integer $k\ge 1$ and we consider for every face $f$  { (that for simplicity we assume to be convex)} the 
Serendipity space $V_k^S(f)$ (as we said, to fix ideas, with the lazy choice). 

Then for $\kD\ge-1$ we define the space 
\begin{multline}\label{space3d}
V_{k,\kD}(\PP):=\{\vphi\in C^0(\overline{\PP})\mbox{ such that }\\ \vphi_{|f} \in V_k^S(f)
\;\forall\mbox{ face }f\mbox{ in }\partial\PP,
\mbox{ and }
\Delta\vphi\in\P_{\kD}(\PP)
\}
\end{multline}
with the degrees of freedom
\begin{equation}\label{dofn-03d}
\bullet\;\,\mbox{the values of $\varphi$ at the vertices} ,
\end{equation}
\begin{equation}\label{dofn-13d}
\hskip1.2truecm\bullet \int_e\varphi\, q\ds\quad \forall \mbox{ edge }e 
\mbox{ for all }q\in\P_{k-2}(e) ,
\end{equation}
\begin{equation}\label{dofn-23d}
\hskip1.2truecm\bullet \int_f\varphi\, q\df\quad \forall \mbox{ face }f \mbox{ for all }q\in\P_{k-3}(f) .
\end{equation}
\begin{equation}\label{dofn-33d}
\bullet \int_\PP \varphi\, q\dP\quad \mbox{ for all }q\in\P_{\kD}(\PP) .
\end{equation}
We point out that  the degrees of freedom \eqref{dofn-23d} follow from our decision  to  always take the lazy choice on every face { and from the simplified assumption of convex faces}.
For non convex faces we should adapt the nature of the degrees of freedom (although, in general, not the number), as discussed in Subsection \ref{3.2}. 

\subsection{$\calD$, $\Pi^\mathcal{S}_k$,  and the Serendipity spaces}

At this point we could restart {\it mutatis mutandis} the reduction procedure that we
followed for the two-dimensional case. The two cases (2-dimensional and 3-dimensional)
are very similar, and therefore we will summarize the 3-dimensional one very shortly.

We start by taking $\kD=k$ in \eqref{space3d} as we did at the beginning of Section \ref{sere2d}. Let $N_{\PP}$ be the number of degrees of freedom of  $V_{k,k}(\PP)$.
We order them in such a way that
the boundary ones  \eqref{dofn-03d}-\eqref{dofn-23d} come first (and, typically, the internal moments are ordered  
from lowest to highest degree). Then we choose an integer $S$ such that the first $S$
degrees of freedom are: the boundary ones, and the internal moments of degree up
to $k-\eta_{\PP}$, where now, in general, $\eta_{\PP}$ is the number of {\it distinct} planes that contain all the faces of $\PP$. Here too, we could make the lazy choice of taking always $\eta_{\PP}=4$.

We note that our degrees of freedom will satisfy the property (that we still call $\Sp$):
\begin{equation}\label{condS3}
\!\!\!\!(\Sp)\;\;\forall p_k\in\P_{k,3}(\PP),\quad \{\delta_1(p_k)=\delta_2(p_k)=...=\delta_S(p_k)=0\}\Rightarrow
\{p_k\equiv 0\} ,
\end{equation}
and therefore we can use them to construct, following the same path that we took in Subsection \ref{costR}, a projection operator $\Pi^\mathcal{S}_k$ such that :
\begin{equation}\label{propR13}
\bullet \mbox{ $\Pi^\mathcal{S}_k$ is computable using only the d.o.f.  $\delta_1,...,\delta_{S}$} ,
\end{equation}and
\begin{equation}\label{propR23}
\bullet \mbox{ $\Pi^\mathcal{S}_k q_k = q_k$ for all $q_k\,\in\P_{k}$}. 
\end{equation}
 Once we have the operator $\Pi^\mathcal{S}_k$ we can define the Serendipity Virtual Element space $V_k^S(\PP)$
as
\begin{equation}\label{VEMS2dkk}
V^S_k(\PP)=\{\vphi\in V_{k,k}(\PP)\mbox{ s. t. }\delta_r(\vphi)=\delta_r(\Pi^\mathcal{S}_k\vphi)\;\forall r = S+1,...,N_{\PP}\}.
\end{equation}
As degrees of freedom for the space $V^S_k(\PP)$,  defined in \eqref{VEMS2dkk}, we take
\begin{equation}\label{dofn-03dS}
\bullet\;\,\mbox{the values of $\varphi$ at the vertices} ,
\end{equation}
\begin{equation}\label{dofn-13dS}
\hskip1.2truecm\bullet \int_e\varphi\, q\ds\quad \forall \mbox{ edge }e 
\mbox{ for all }q\in\P_{k-2}(e) ,
\end{equation}
\begin{equation}\label{dofn-23dS}
\hskip1.2truecm\bullet \int_f\varphi\, q\df\quad \forall \mbox{ face }f \mbox{ for all }q\in\P_{k-3}(f) ,
\end{equation}
\begin{equation}\label{dofn-33dS}
\bullet \int_\PP \varphi\, q\dP\quad \mbox{ for all }q\in\P_{k-\eta_{\PP}}(\PP) ,
\end{equation}
and we point out that in \eqref{dofn-23dS} we could use, for each face $f$, the moments only up to the degree $k-\eta_{f}$ if we chose
a more stingy strategy. Just to make a toy-example, on a regular dodecahedron ($12$ pentagonal faces, with a total of $20$ vertexes and $30$ edges) for $k=4$ we would have, with the most stingy choice (on faces and inside), only one d.o.f. per vertex and three additional 
degrees of freedom per edge (for a total of $110$ degrees
of freedom: the absolute minimum, if you want a $\P_4$ conforming element). The original VEMs would have required $12\times\pi_{2,2}+1\times\pi_{2,3}=82$ additional degrees of freedom
($6$ for each of  the $12$ faces , and $10$ for the interior of the polyhedron). Adopting the lazy choice, instead, we would add (to the $110$ ones on vertices
and edges) $3$ degrees of freedom per face and one inside (for a total of 37 additional d.o.f.s).

\begin{remark} \label{misti}
The extension of the present idea to construct  a Serendipity version of $H({\rm div})$ and $H({\bf curl})$-conforming vector valued spaces 
(as the ones in \cite{super-misti}) can be done in a reasonably easy way,  and is the object of a paper in preparation (by the same authors).
\end{remark}

%
\subsection{Different degrees of freedom}
%

An obvious generalization of our procedure (among several others) would be (for simplicity: in two dimensions) to substitute part of  the original degrees of freedom \eqref{dofn-0}-\eqref{dofn-2} with some equivalent ones. For instance, for $k\ge 2$
one can use, instead of the moments \eqref{dofn-1},  the values of $\vphi$ at $k-1$ nodes inside each edge (a typical convenient choice would be given by the $k-1$ Gauss-Lobatto nodes inside the edge).

     Another example has been suggested already in Proposition \ref{nonconv}: for non convex polygons, we could use suitable
    polynomial weights in the degrees of freedom, including the equations (among those defining the edges) that change sign
    inside $\E$ .

But more imaginative variants could come out being convenient
in some circumstances.  In particular, it is not necessary that the functionals in \eqref{gliS} (the ones used to construct $\calD$ and then $\Pi^\mathcal{S}_k$), are a subset of the original degrees of freedom: we only need to select $S$
linear functionals, and then, if convenient, use in \eqref{gliS} a different set of d.o.f.s that can be deduced from the  chosen ones.

 For instance, one could keep  the nodal values \eqref{dofn-0} and the moments \eqref{dofn-1} as degrees of freedom  (for obvious conformity reasons), but then use in \eqref{gliS}, in place of \eqref{dofn-0} and  \eqref{dofn-1}: 
 \begin{equation}\label{meanv}
 \bullet\mbox{ the mean value of $\varphi$ over $\partial\E$}
 \end{equation}
  and (after ordering the vertices $V_1,...,V_N, V_{N+1}\equiv V_1$ in the, say, counterclockwise order) the integrals 
  \begin{equation}\label{weird}
  \bullet\;
I_{j,k}:=\int_{V_j}^{V_{j+1}}\frac{\partial\varphi}{\partial t}q_{k-1}\ds \mbox{ for } j=1,2,...,N
\mbox{ and }q_{k-1}\in\P_{k-1}
\end{equation}
(under the obvious condition that  $\sum_j I_{j,1}\equiv \varphi(V_{N+1})-\varphi(V_1)=0$). Clearly, as we said, the boundary degrees of freedom would remain \eqref{dofn-0}-\eqref{dofn-1}, but the new ones (that is, \eqref{meanv} and \eqref{weird}) could be employed (possibly together with other data) to define $\calD$ and then to construct $\Pi^\mathcal{S}_k$.  A choice like this might be interesting when combining Serendipity VEM spaces of various nature (like, say, the nodal ones here and the edge-ones mentioned in Remark \ref{misti}  above).

\section{Numerical experiments}

As pointed out before, the Serendipity variant of the Virtual Element Method raises
several problems of computational nature, 
like for instance the definition of $\eta_E$ in the case
of almost-degenerate polygons, or the choice of the scalar product in the definition
of the projector ${\Pi^\mathcal{S}_k}$.

In this paper we will limit ourselves to the presentation of very simple numerical
experiments showing that the method works as expected for an elliptic equation 
in two cases: quadrilateral
elements and a more general Voronoi mesh made of convex polygons. In both cases
we have taken $k=2,3,4$. The error shown is always the relative $L^2$ error;
the $H^1$ error behaves similarly.

\begin{figure}[ht]
\hfill
 \begin{minipage}[b]{0.49\textwidth}
  \begin{center}
  \includegraphics[width=0.9\textwidth]{./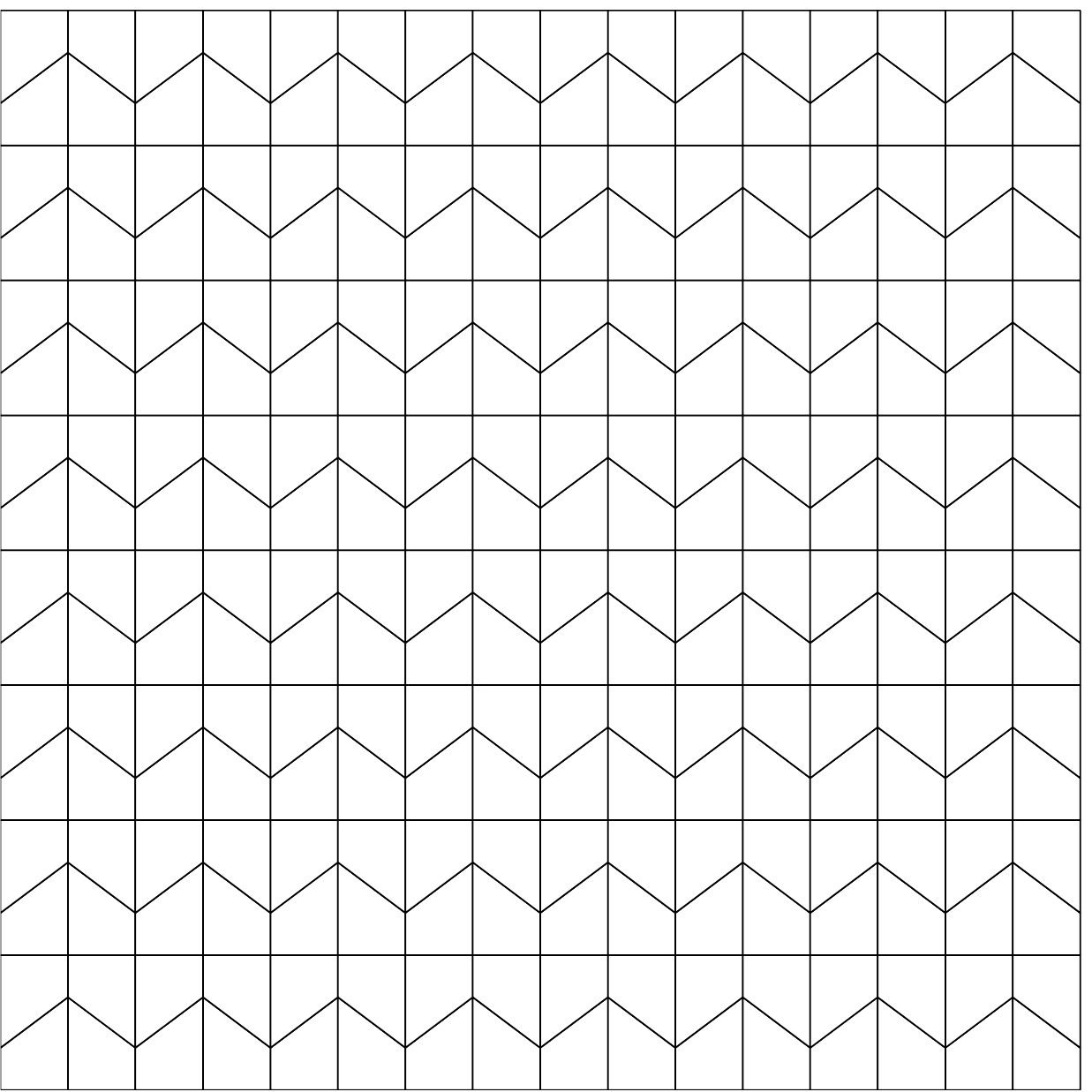}
  \end{center}
  \caption{Trapezoidal mesh}
 \label{fig:boffi}
 \end{minipage}
\hfill
 \begin{minipage}[b]{0.49\textwidth}
  \begin{center}
  \includegraphics[width=0.9\textwidth]{./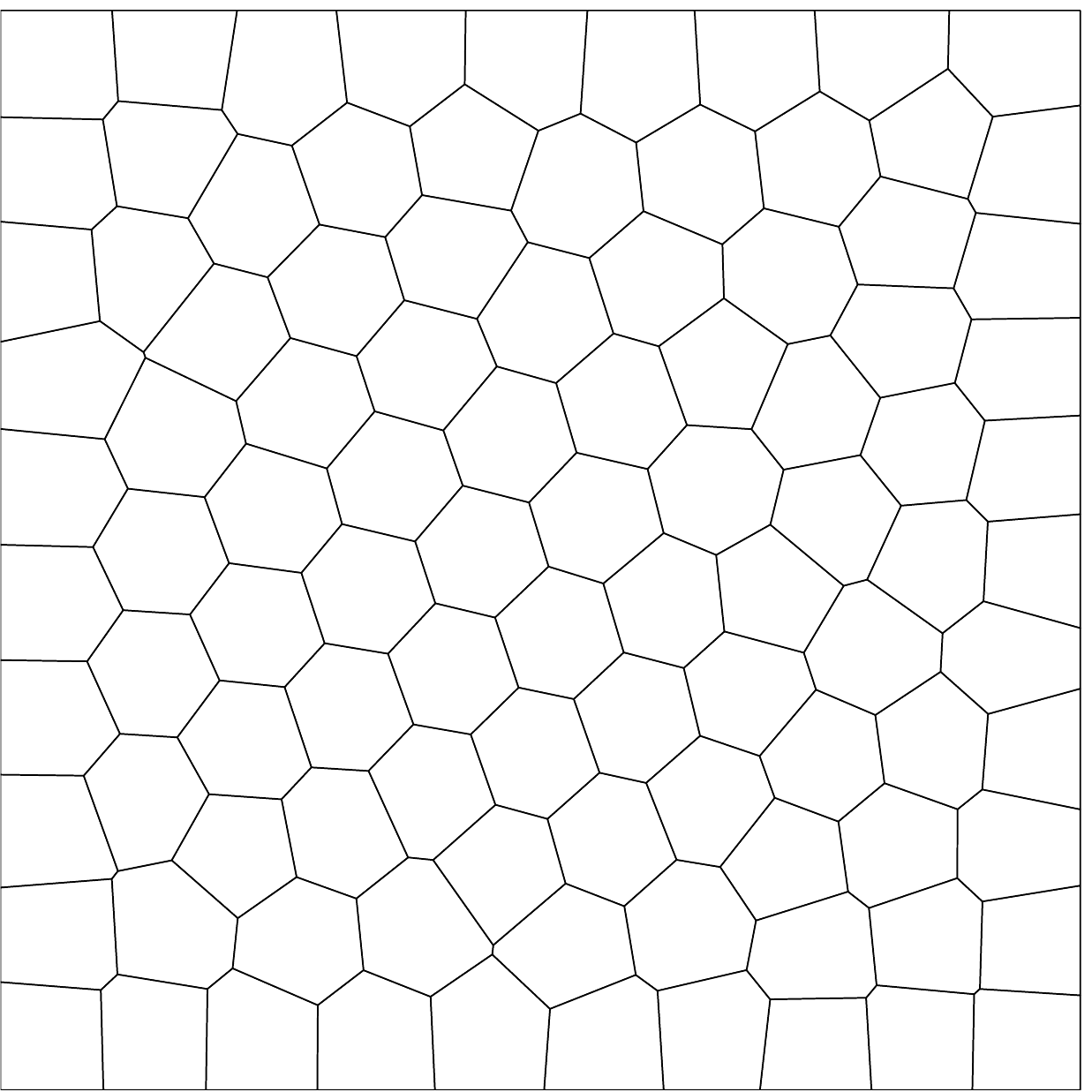}
  \end{center}
  \caption{Voronoi mesh}
 \label{fig:lloyd}
 \end{minipage}
\hfill
\end{figure}
We set $\Omega=]0,1[^2$ and consider the elliptic problem
\begin{equation}\label{Pb-cont}
\left\{
\begin{aligned}
\div(-\diffp\nabla\p + \bb\p) + \reaction\,\p
&= f\quad\text{in }\Omega\\
\p &= g\quad\text{on }\partial\Omega.\\
\end{aligned}
\right.
\end{equation}
The variational form of problem \eqref{Pb-cont} is given by
\begin{equation}\label{tre-uno}
\int_{\Omega}\diffp \nabla \p\cdot \nabla \q\,{\rm d}x
-\int_{\Omega} \p (\bb\cdot\nabla \q) \,{\rm d}x
+\int_{\Omega}\reaction \p \,\q\,{\rm d}x
=
\int_{\Omega}f\,\q
\end{equation}
and, as shown in \cite{variable-primal}, its Virtual Element approximation
consists in replacing in each element
\begin{equation}
\p\ \ \text{with}\ \ \Pi^0_{k-1}\p_h
\qquad\text{and}\qquad
\nabla\p\ \ \text{with}\ \ \Pi^0_{k-1}\nabla\p_h.
\end{equation}
The difference with respect to \cite{variable-primal} is that here the 
$L^2$ projections are computed using the operator $\Pi_{k}^\mathcal{S}$ 
instead of $\Pi^\nabla_k$
for the missing moments.
The stabilization term is defined in terms of the $L^2$-projection.

\subsection{Quadrilateral meshes}

In the quadrilateral case we have considered the trapezoidal mesh studied 
in \cite{A-B-F-nod} for which the authors have proved that the classical serendipity
finite elements do not converge with the optimal rates. We have compared our
serendipity VEM with the classical serendipity finite elements $\aleS_k$
and with the standard $\aleQ_k$ elements. The sequence is composed of four meshes
with $8\times8$, $16\times16$, $32\times32$ and $64\times64$ trapezoids respectively.
In Fig. \ref{fig:boffi} the $16\times16$ mesh is shown.

We have considered the Poisson problem, i.e. we have taken in \eqref{Pb-cont}
\begin{equation}
\diffp = \begin{pmatrix}1&0\\0&1\end{pmatrix},\quad
\bb=(0,0),\quad
\reaction = 0,
\end{equation}
with the right hand side $f$ and the Dirichlet data $g$ defined in such a way that
the exact solution is the fifth-degree polynomial
\begin{equation}
\pex(x,y) := x^3+5y^2-10y^3+y^4+x^5+x^4y.
\end{equation}
In Figs \ref{fig:boffi-L2-k=2}, \ref{fig:boffi-L2-k=3} and \ref{fig:boffi-L2-k=4}
we show the relative $L^2$ error for the three methods. We observe that the
serendipity VEM (``stingy'') behaves like the $\aleQ_k$ element but with much fewer degrees
of freedom.

\begin{figure}[ht]
\hfill
 \begin{minipage}[b]{0.49\textwidth}
  \begin{center}
  \includegraphics[width=\textwidth,height=6.3cm]{./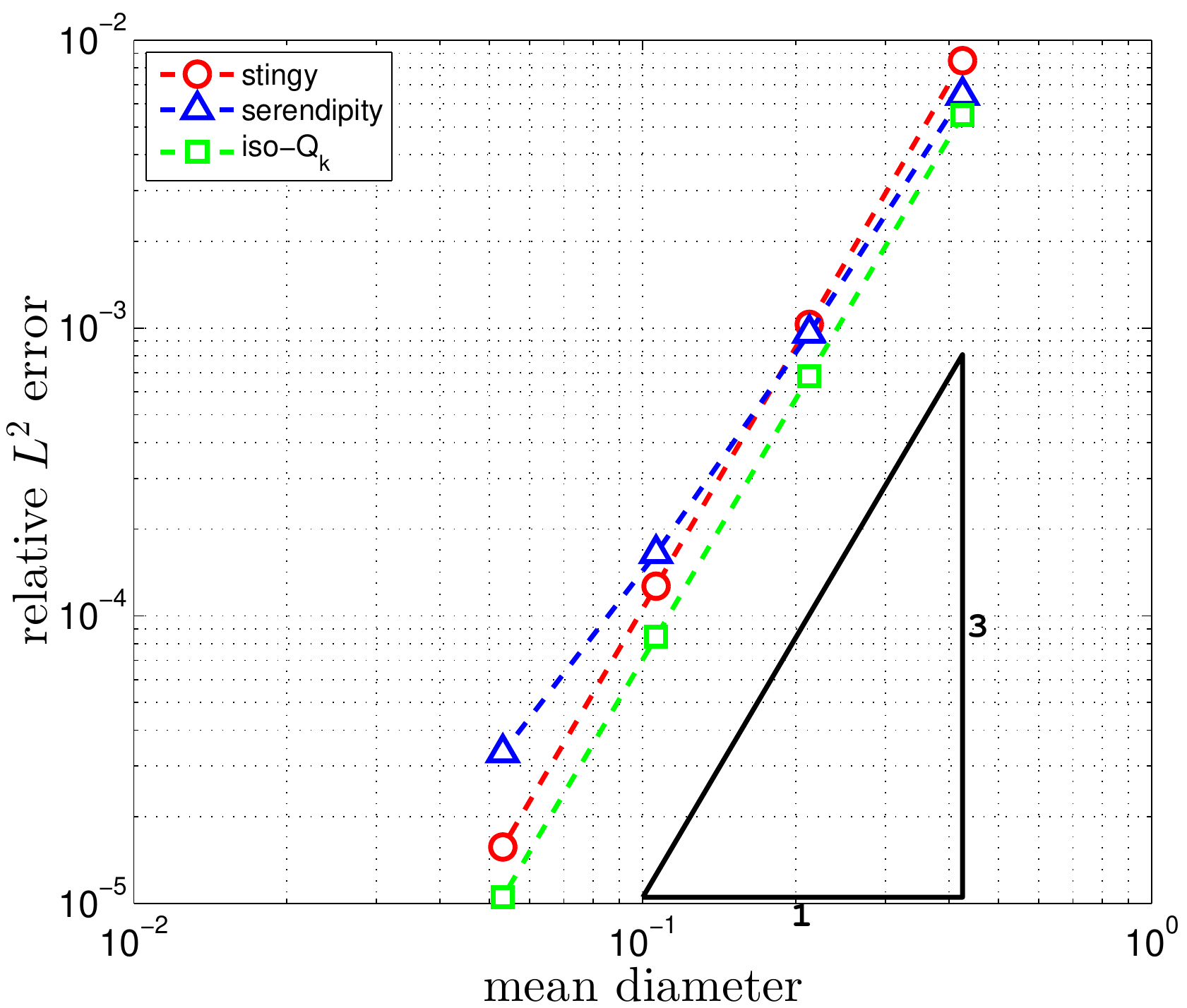}
  \end{center}
 \end{minipage}
\hfill
 \begin{minipage}[b]{0.49\textwidth}
  \begin{center}
\begin{tabular}{ r | r | r | r |}
\cline{2-4}
& \multicolumn{3}{ c |  }{degrees of freedom} \\ 
\hline
\multicolumn{1}{ | c | }{\# el.}  & stingy & $\aleS_k$ & $\aleQ_k$ \\
\hline
\multicolumn{1}{ | c | }{16} & 65 & 65 & 81\\ 
\hline
\multicolumn{1}{ | c | }{64} & 225 & 225 & 289\\ 
\hline
\multicolumn{1}{ | c | }{256} & 833 & 833 & 1089\\ 
\hline
\multicolumn{1}{ | c | }{1024} & 3201 & 3201 & 4225\\ 
\hline
\end{tabular}

	\bigskip
	\bigskip
	\end{center}
 \end{minipage}
  \caption{$k=2$, $L^2$ error for the trapezoidal meshes.
   Note the non-optimal convergence
   rate for the classical serendipity finite element method $\aleS_k$
   compared with the serendipity VEM (``stingy''); both have the same number
   of degrees of freedom.}
 \label{fig:boffi-L2-k=2}
\hfill
\end{figure}

\begin{figure}[ht]
\hfill
 \begin{minipage}[b]{0.49\textwidth}
  \begin{center}
  \includegraphics[width=\textwidth,height=6.3cm]{./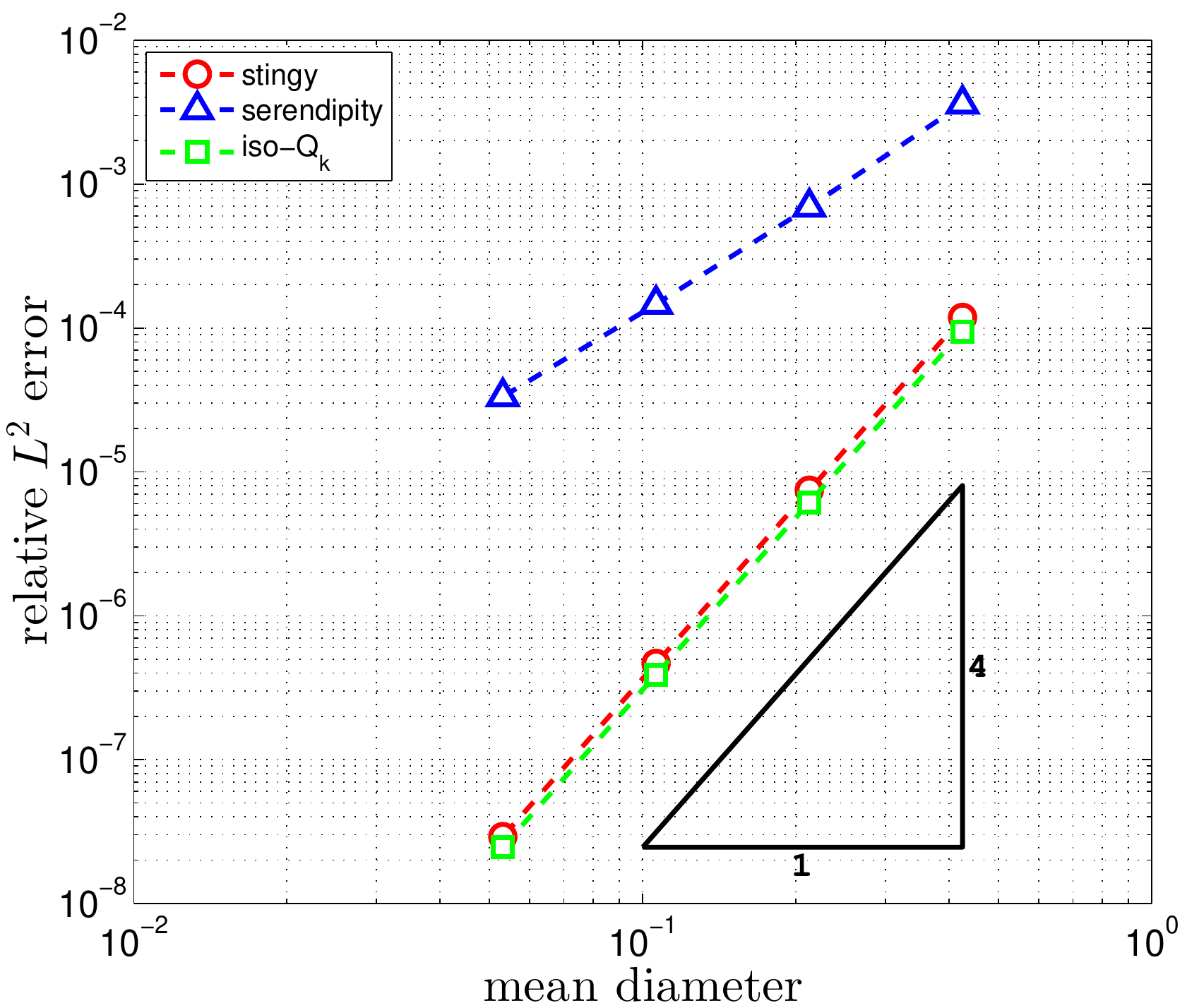}
  \end{center}
 \end{minipage}
\hfill
 \begin{minipage}[b]{0.49\textwidth}
  \begin{center}
\begin{tabular}{ r | r | r | r |}
\cline{2-4}
& \multicolumn{3}{ c |  }{degrees of freedom} \\ 
\hline
\multicolumn{1}{ | c | }{\# el.}  & stingy & $\aleS_k$ & $\aleQ_k$ \\
\hline
\multicolumn{1}{ | c | }{16} & 105 & 105 & 169\\ 
\hline
\multicolumn{1}{ | c | }{64} & 369 & 369 & 625\\ 
\hline
\multicolumn{1}{ | c | }{256} & 1377 & 1377 & 2401\\ 
\hline
\multicolumn{1}{ | c | }{1024} & 5313 & 5313 & 9409\\ 
\hline
\end{tabular}

	\bigskip
	\bigskip
	\end{center}
 \end{minipage}
  \caption{$k=3$, $L^2$ error for the trapezoidal meshes.}
 \label{fig:boffi-L2-k=3}
\hfill
\end{figure}

\begin{figure}[ht]
\hfill
 \begin{minipage}[b]{0.49\textwidth}
  \begin{center}
  \includegraphics[width=\textwidth,height=6.3cm]{./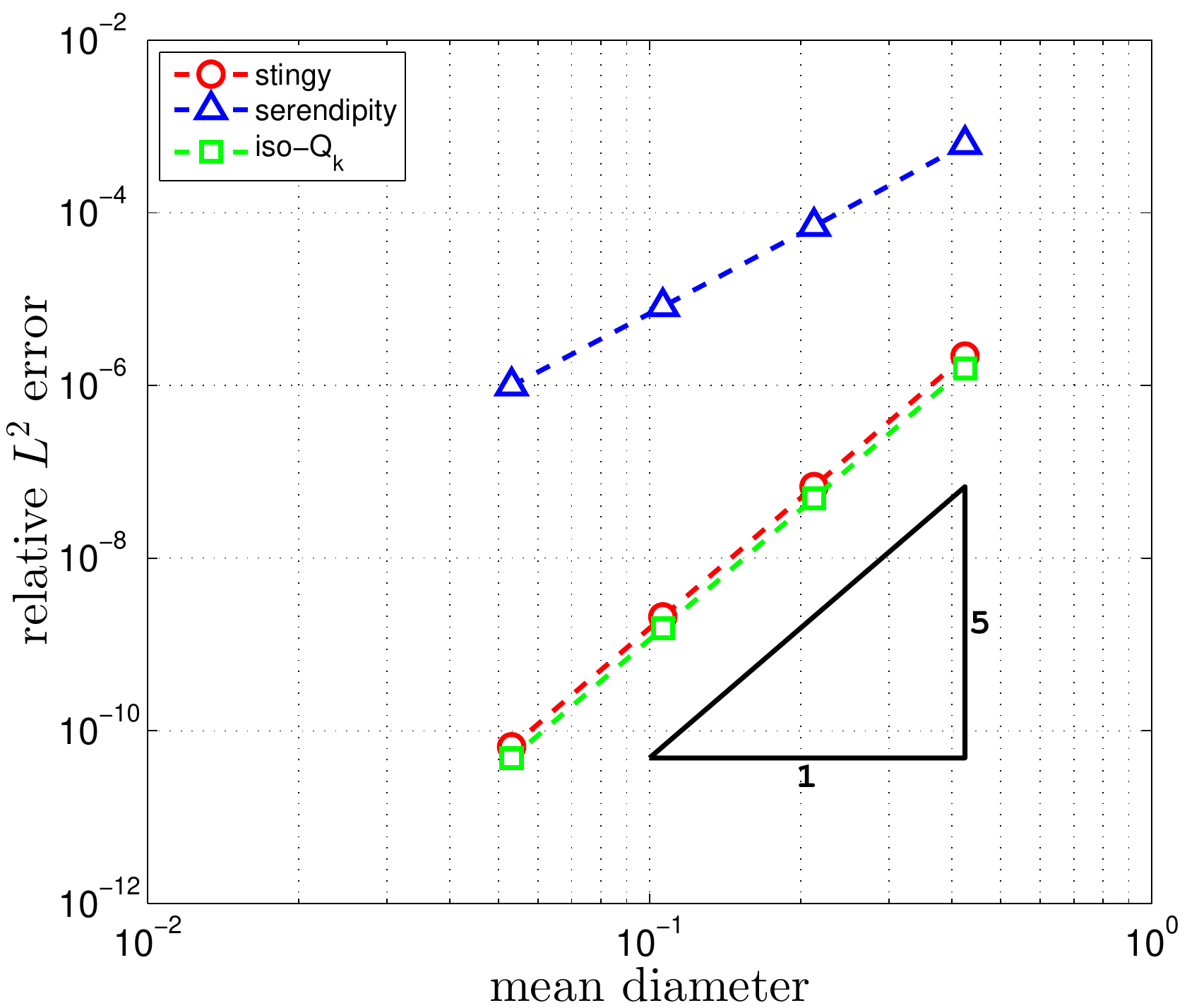}
  \end{center}
 \end{minipage}
\hfill
 \begin{minipage}[b]{0.49\textwidth}
  \begin{center}
\begin{tabular}{ r | r | r | r |}
\cline{2-4}
& \multicolumn{3}{ c |  }{degrees of freedom} \\ 
\hline
\multicolumn{1}{ | c | }{\# el.}  & stingy & $\aleS_k$ & $\aleQ_k$ \\
\hline
\multicolumn{1}{ | c | }{16} & 161 & 161 & 289\\ 
\hline
\multicolumn{1}{ | c | }{64} & 577 & 577 & 1089\\ 
\hline
\multicolumn{1}{ | c | }{256} & 2177 & 2177 & 4225\\ 
\hline
\multicolumn{1}{ | c | }{1024} & 8449 & 8449 & 16641\\ 
\hline
\end{tabular}

	\bigskip
	\bigskip
	\end{center}
 \end{minipage}
  \caption{$k=4$, $L^2$ error for the trapezoidal meshes}
 \label{fig:boffi-L2-k=4}
\hfill
\end{figure}

\subsection{Polygonal meshes}

The polygonal meshes are made of 25, 100, 400 and 1600 polygons and have been
obtained starting with a random Voronoi mesh and then regularized by means
of Lloyd iterations. The 100 polygon mesh is shown in Fig. \ref{fig:lloyd}.

The equation that we solve is the same used for the numerical experiments in 
\cite{variable-primal}. We take
\begin{equation}
\diffp = \begin{pmatrix}y^2+1&-xy\\-xy&x^2+1\end{pmatrix},\quad
\bb=(x,y),\quad
\reaction = x^2+y^3,
\end{equation}
and right hand side $f$ and Dirichlet boundary condition $g$ defined
in such a way that the exact solution is
\begin{equation}
\pex(x,y) := x^2 y + \sin(2\pi x) \sin(2\pi y)+2.
\end{equation}
In Figs \ref{fig:lloyd-L2-k=2}, \ref{fig:lloyd-L2-k=3} and \ref{fig:lloyd-L2-k=4} 
we show the $L^2$ error for the ``stingy'' and the
``lazy'' strategies, and we compare them to the original VEM. 
Note that we have always taken $\eta_E$ equal to the number of edges
of the polygon $E$. 

In all cases we observe that the errors are very similar even if the
number of degrees of freedom is considerably different.

\begin{figure}[ht]
\hfill
 \begin{minipage}[b]{0.49\textwidth}
  \begin{center}
  \includegraphics[width=\textwidth,height=6.3cm]{./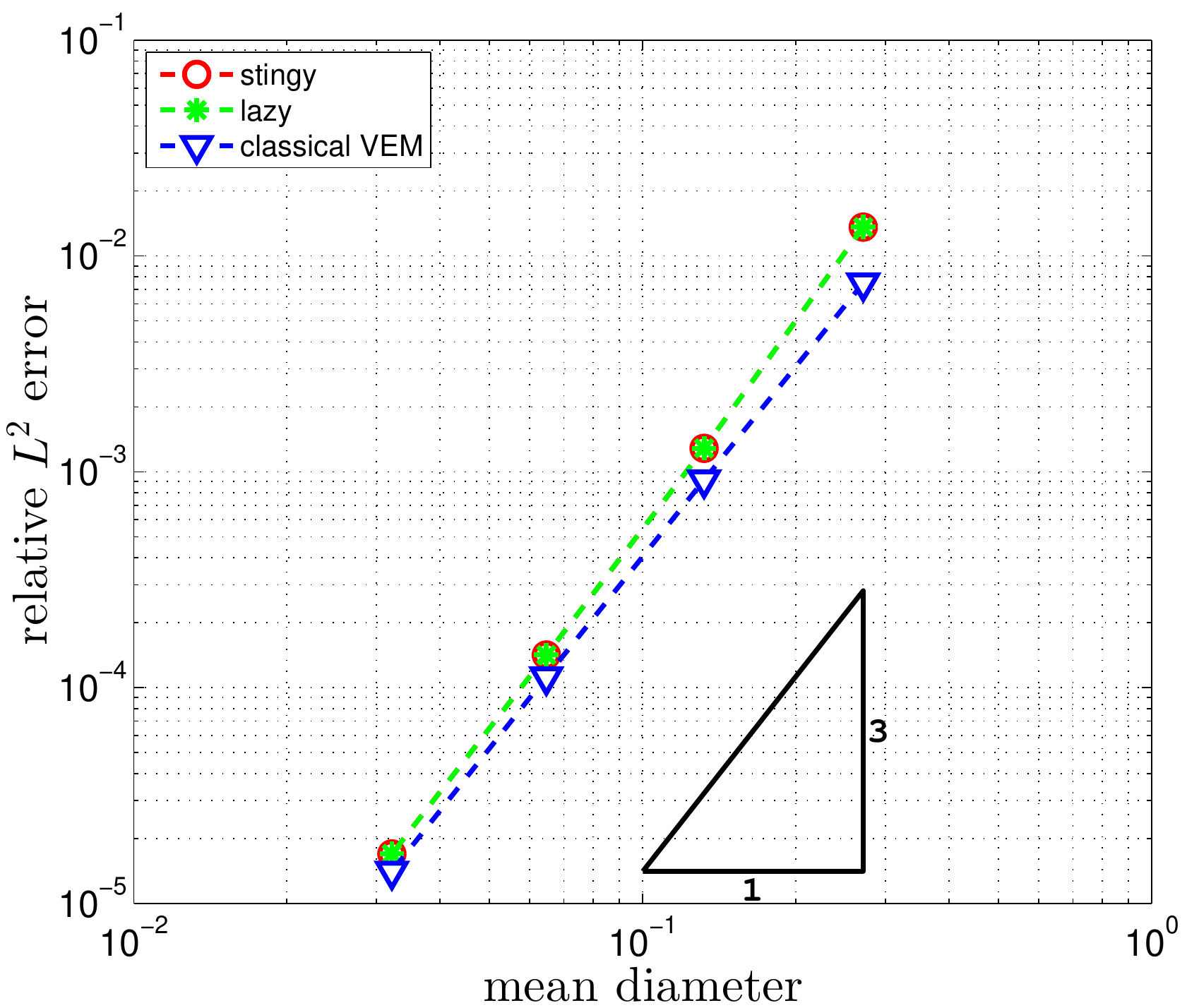}
  \end{center}
 \end{minipage}
\hfill
 \begin{minipage}[b]{0.49\textwidth}
  \begin{center}
\begin{tabular}{ r | r | r | r |}
\cline{2-4}
& \multicolumn{3}{ c |  }{degrees of freedom} \\ 
\hline
\multicolumn{1}{ | c | }{\# el.}  & stingy & lazy & VEM \\
\hline
\multicolumn{1}{ | c | }{25} & 128 & 128 & 153\\ 
\hline
\multicolumn{1}{ | c | }{100} & 503 & 503 & 603\\ 
\hline
\multicolumn{1}{ | c | }{400} & 2003 & 2003 & 2403\\ 
\hline
\multicolumn{1}{ | c | }{1600} & 8003 & 8003 & 9603\\ 
\hline
\end{tabular}

	\bigskip
	\bigskip
	\end{center}
 \end{minipage}
  \caption{$k=2$, $L^2$ error for the Lloyd meshes}
 \label{fig:lloyd-L2-k=2}
\hfill
\end{figure}

\begin{figure}[!]
\hfill
 \begin{minipage}[b]{0.49\textwidth}
  \begin{center}
  \includegraphics[width=\textwidth,height=6.3cm]{./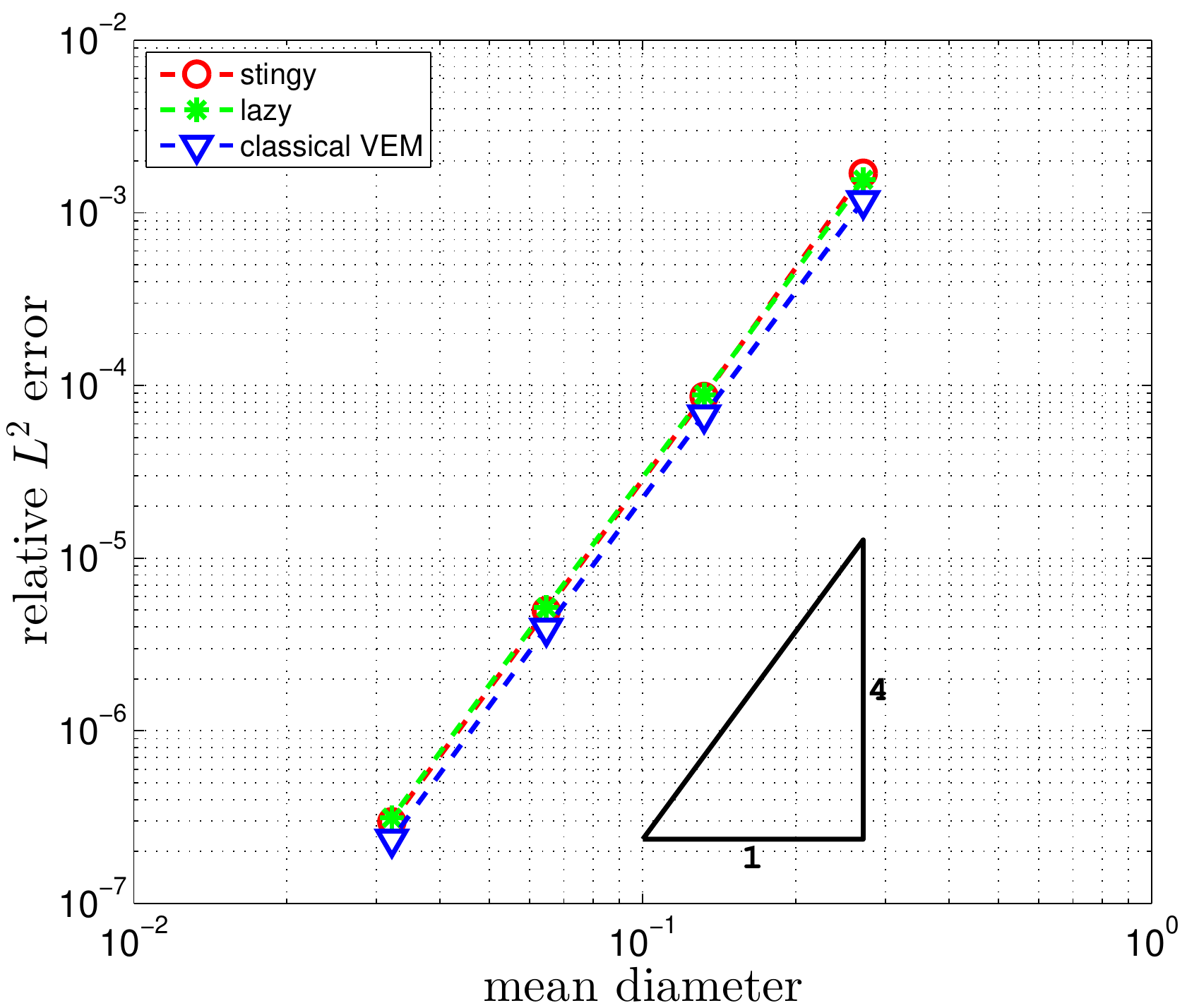}
  \end{center}
 \end{minipage}
\hfill
 \begin{minipage}[b]{0.49\textwidth}
  \begin{center}
\begin{tabular}{ r | r | r | r |}
\cline{2-4}
& \multicolumn{3}{ c |  }{degrees of freedom} \\ 
\hline
\multicolumn{1}{ | c | }{\# el.}  & stingy & lazy & VEM \\
\hline
\multicolumn{1}{ | c | }{25} & 204 & 229 & 279\\ 
\hline
\multicolumn{1}{ | c | }{100} & 804 & 904 & 1104\\ 
\hline
\multicolumn{1}{ | c | }{400} & 3204 & 3604 & 4404\\ 
\hline
\multicolumn{1}{ | c | }{1600} & 12804 & 14404 & 17604\\ 
\hline
\end{tabular}

	\bigskip
	\bigskip
	\end{center}
 \end{minipage}
  \caption{$k=3$, $L^2$ error for the Lloyd meshes}
 \label{fig:lloyd-L2-k=3}
\hfill
\end{figure}

\begin{figure}[!]
\hfill
 \begin{minipage}[b]{0.49\textwidth}
  \begin{center}
  \includegraphics[width=\textwidth,height=6.3cm]{./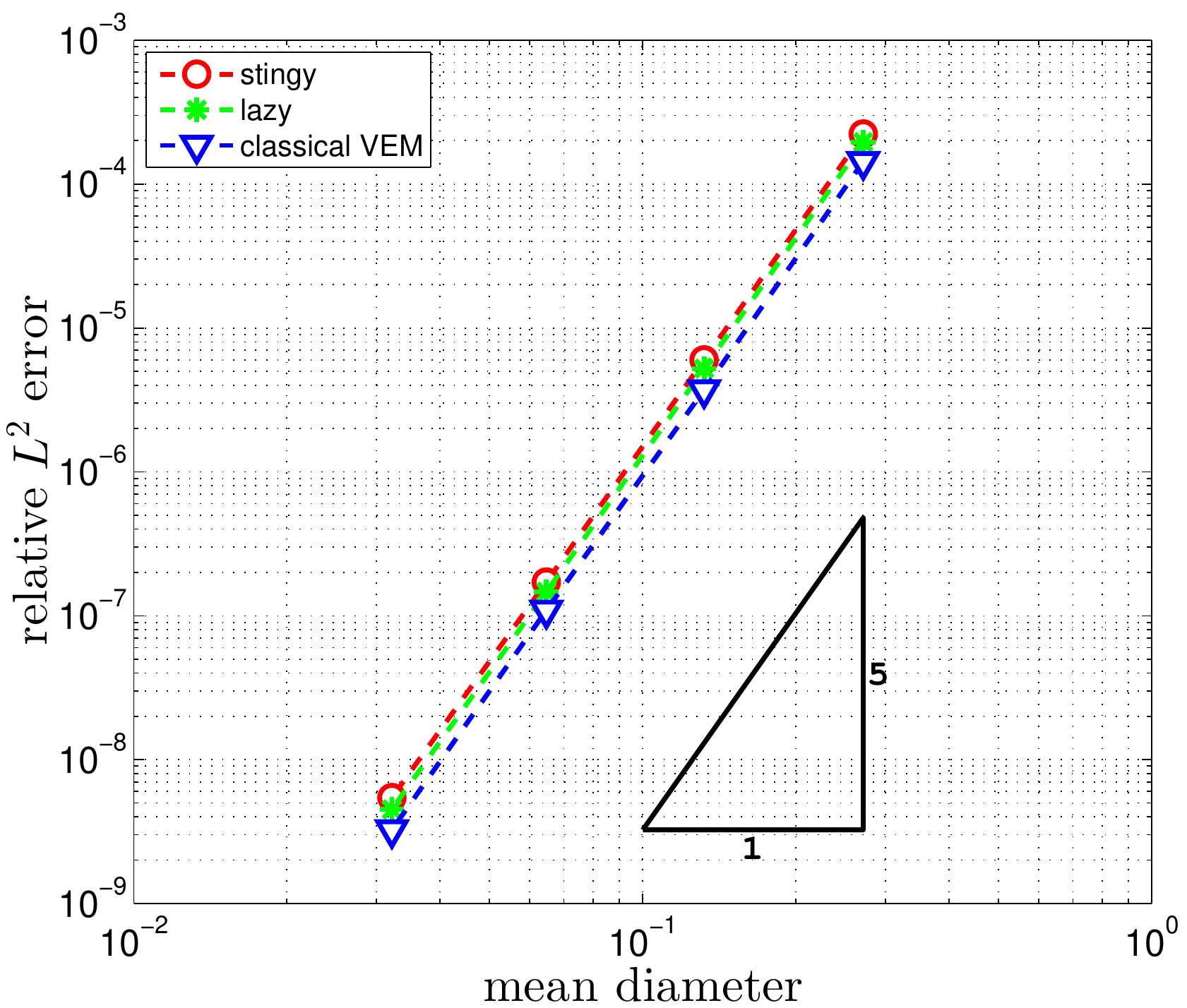}
  \end{center}
 \end{minipage}
\hfill
 \begin{minipage}[b]{0.49\textwidth}
  \begin{center}
\begin{tabular}{ r | r | r | r |}
\cline{2-4}
& \multicolumn{3}{ c |  }{degrees of freedom} \\ 
\hline
\multicolumn{1}{ | c | }{\# el.}  & stingy & lazy & VEM \\
\hline
\multicolumn{1}{ | c | }{25} & 284 & 355 & 430\\ 
\hline
\multicolumn{1}{ | c | }{100} & 1112 & 1405 & 1705\\ 
\hline
\multicolumn{1}{ | c | }{400} & 4408 & 5605 & 6805\\ 
\hline
\multicolumn{1}{ | c | }{1600} & 17614 & 22405 & 27205\\ 
\hline
\end{tabular}

	\bigskip
	\bigskip
	\end{center}
 \end{minipage}
  \caption{$k=4$, $L^2$ error for the Lloyd meshes}
 \label{fig:lloyd-L2-k=4}
\hfill
\end{figure}

\section{Conclusions}
Virtual Element Methods generalize Finite Elements from simple geometric shapes
(triangles, tetrahedrons, quadrilaterals, hexahedrons, etc.) to much more general shapes,
including several types of ``degenerations''. However, when restricted to simple
geometries they do not reproduce the traditional FEM, not even in the number of
degrees of freedom. For simplexes (in 2 or 3 dimensions), FEMs of order $k$ have a number of internal  degrees of freedom that is equal to $\pi_{k-d-1,d}$ (the dimension of the space of polynomials of degree $\le k-d-1$ in $d$ dimensions), while the number of internal d.o.f. of traditional VEMs is equal to $\pi_{k-d,d}$. On quadrilaterals and
hexahedrons traditional FEMs have ${\mathscr q}_{k-d,d}$ internal nodes (the dimension of the space of polynomials of degree $\le k-d$ {\it in each variable} in $d$ dimensions)
while VEMs do better with only $\pi_{k-d,d}$. Serendipity FEMs, however, can go
down to $\pi_{k-d-3,d}$, but they suffer dramatic losses of accuracy when the elements
are not parallelograms. Something quite similar also happens for hexahedrons.

Here we introduced a new family of VEMs that mimicks (in some sense) the Serendipity idea of FEM. These new elements reduce in a significant way the number of
internal degrees of freedom of traditional VEMs, without losing the good features of 
being able to deal with very general shapes and distortions. 

On triangles, the new
VEMs coincide now with Finite Elements, so that we don't gain anything apart from the conceptual satisfaction of equaling the ``competitors'' (in a friendly sense) where and when they are at their best.

On quads, however, the new VEMs can match the number of degrees of freedom of Serendipity FEM with {\it much} more generality in the geometry, and could therefore
become a competitor even for rather simple element shapes (as it is clearly shown by the numerical experiments of the previous section). On top of that,  they allow
extremely general geometries that are totally out of reach for Finite Elements.

We point out that in three dimensions our discussion  applies as well to the degrees of freedom that are {\it internal to the faces}, that therefore cannot be eliminated by a simple static condensation.

\bibliographystyle{amsplain}
\bibliography{general-bibliography}
\end{document}